\documentclass[12pt]{amsart}

\usepackage[utf8]{inputenc}    
\usepackage[T1]{fontenc}       
\usepackage{amsmath, amssymb, amsthm} 
\usepackage{graphicx}          
\usepackage{hyperref}         
\usepackage{geometry}         
\usepackage{enumitem}          
\usepackage{caption}           
\usepackage{xcolor}           
\usepackage{cleveref}

\newtheorem{theorem}{Theorem}[section]
\newtheorem{lemma}[theorem]{Lemma}
\newtheorem{proposition}[theorem]{Proposition}

\theoremstyle{definition}

\theoremstyle{remark}
\newtheorem{remark}[theorem]{Remark}

\title{New upper bounds for average kissing numbers}

\author{Mark Bockhaus}

\address{Mark Bockhaus, School of Mathematical and Statistical Sciences, The University of Texas Rio Grande Valley, Brownsville, TX 78520, USA}
\email{mark.bockhaus01@utrgv.edu}

\author{Alexey Glazyrin}
\address{Alexey Glazyrin, School of Mathematical and Statistical Sciences, The University of Texas Rio Grande Valley, Brownsville, TX 78520, USA}
\email{alexey.glazyrin@utrgv.edu}

\date{}

\begin{document}

\maketitle

\begin{abstract}
A packing of balls in Euclidean space is a set of balls with non-overlapping interiors. For a finite packing of balls, its average contact number is the average number of balls tangent to a ball in this packing, that is, twice the total number of tangencies divided by the number of balls. The average kissing number of the space is defined as the supremum of average contact numbers over all finite ball packings. In this paper, we present a new semidefinite programming approach to average kissing numbers and improve upon known upper bounds in all dimensions \(3\) through \(30\). In particular, we provide the first improvements to the upper bounds in dimensions \(10\) through \(30\) obtained from kissing numbers. We also fully solve the average kissing number problem in dimensions \(8\) and \(24\) by showing that the average kissing numbers coincide with the kissing numbers, 240 and 196560, respectively. These values are limits of average contact numbers of finite subpackings from the \(E_8\) and Leech lattices.
\end{abstract}

\section{Introduction}\label{sect:intro}
Over three hundred years ago, the kissing number problem was born during a discussion between Isaac Newton and David Gregory \cite{Zong:1999}, \cite{Pfender:2004}, \cite{Boyvalenkov:2012}. The problem in question was: Given a unit three-dimensional sphere, how many non-overlapping unit spheres can simultaneously touch this initial sphere? While the problem was initially conceived in three dimensions, the idea can easily be extended to any dimension.  Thus, the kissing number problem can be generalized as: Given an \(n\)-dimensional unit ball, how many non-overlapping \(n\)-dimensional unit balls can simultaneously touch this initial ball?  This problem has been the subject of significant research, with the kissing number, \(\tau_n\), being known exactly only in dimensions \(1, 2, 3, 4, 8\) and \(24\) \cite{Schutte:1953}, \cite{Musin:2006}, \cite{Maehara:2007}, \cite{Glazyrin:2023}, \cite{Musin:2008}, \cite{Bachoc:2008}, \cite{Levenshtein:1979}, \cite{Odlyzko:1979}. 

While the kissing number problem requires all balls to be congruent, it is also of interest to consider what may happen when we allow balls of different radii. Of course, for any particular ball we can choose an arbitrarily large number of small balls tangent to it, but the average number of tangencies cannot grow indefinitely. Thus it would be interesting to determine how large this average number can be for a packing of balls. We clarify that a \textit{ball packing} is a set of \(n\)-dimensional balls (of any radii) with pairwise disjoint interiors. We now formalize these notions. Let \(P\) be a finite packing of \(n\)-dimensional balls in \(\mathbb{R}^n\). Let \(E\) be the set of unordered pairs of balls that are tangent to each other or, in other words, the set of edges in the contact graph of the packing. Then the average contact number of the packing is given by: 
    \begin{align*}
        k(P) =\frac{2|E|}{|P|}
    \end{align*}

While we can determine the average contact number of any packing of balls, as with the kissing number problem, we are interested in maximizing this value. Thus, for each dimension, \(n\), we define the \textit{average kissing number} as:
    \begin{align*}
        \kappa_n=\text{sup} \{k(P):P \text{ is a finite packing of balls in }\mathbb{R}^n \}
    \end{align*}

At this point, we also note that $\kappa_n$ is the same for all $n$-dimensional spaces of constant curvature, that is, for Euclidean spaces, spheres, and hyperbolic spaces, and does not depend on the curvature. Moreover, M\"{o}bius transformations also preserve the contact number of a ball packing, if the relative position of the balls is unchanged. All this makes the problem of finding the average kissing number, on the one hand, more fundamental than the kissing number problem and, on the other hand, more difficult.

Finding $\kappa_2$ is straightforward. The contact graph of any packing of disks is planar. The average degree of a planar graph is not larger than 6 by Euler's formula. This bound is attained as a limit for finite configurations from the hexagonal packing of equal disks.

The same idea for finding lower bounds works in all dimensions. In particular, the lattice kissing number, \(\tau_n^*\), which is the largest kissing number for all lattices in \(\mathbb{R}^n\), provides a lower bound for the average kissing number.  In some dimensions, non-lattice periodic packings of unit balls provide a larger average contact number; however, in many cases, the lattice kissing number remains the best known lower bound. 

In terms of upper bounds, a simple argument shows that in any dimension \(\kappa_n \le 2\tau_n\). Indeed, we can order the balls of a packing in increasing order by size. If some balls are of the same size, we choose an arbitrary order between them. Then for any ball in a packing, it has no more than $\tau_n$ neighbors occurring later in the chosen order. Counting oriented edges (from small to large) in a contact graph we immediately get the required bound.

The problem of determining average kissing numbers was first considered by Kuperberg and Schramm \cite{Kuperberg:1994}. They found the first nontrivial bounds for \(\kappa_3\). Their lower bound \(\kappa_3\geq\frac{666}{53}\approx12.566\), in contrast to the lattice and periodic packing bounds mentioned above, was based on the 600-cell and was later improved by Eppstein, Kuperberg, and Ziegler \cite{Eppstein:2003} who showed that \(\kappa_3 \ge
\frac{7656}{607}\approx12.6129\) using an even more intricate modification of the ball packing defined by the 600-cell. 

Kuperberg and Schramm also proved the first nontrivial upper bound in dimension 3: \(\kappa_3\leq 8+4\sqrt{3}\approx 14.9282\). In Section \ref{sect:shell} we briefly review their approach and its improvements and extensions that lead to better upper bounds in dimensions 3 through 9.

Simple upper bounds obtained from \(2\tau_n\) by using upper bounds for kissing numbers were the best known bounds in dimensions 10 and higher. The comparison bounds use the upper bounds for kissing numbers compiled by Cohn \cite{Cohn}. These include bounds obtained from the semidefinite programming method of Bachoc and Vallentin \cite{Bachoc:2008} and its computational refinements by Machado and Oliveira \cite{Machado:2018} and Leijenhorst and de Laat \cite{Leijenhorst:2024}. This paper improves upon the known bounds in dimensions 3 through 9, and provides the first improvements to the simple upper bound in dimensions larger than 9. We present new bounds in dimensions up to 30, but the machinery certainly works in larger dimensions as well, and 30 was chosen just as a somewhat arbitrary stopping point. We also fully solve the average kissing number problem in dimensions 8 and 24. In these dimensions, the answer is the same as the regular kissing number. In dimension 8, we prove that the average kissing number is not larger than 240. This bound is reached as a lattice kissing number of the lattice \(E_8\). In dimension 24, the average kissing number is not larger than 196560 and it is reached as a lattice kissing number of the Leech lattice. Our new upper bounds for average kissing numbers are presented in \Cref{tab:NEW}.  

\begin{table} 
    \begin{center}
    \begin{tabular}{||c c c c ||} 
        \hline
        Dimension & Lower Bound & Previous Upper Bound & New Upper Bound \\ [0.5ex] 
        \hline\hline
                 3 & 12.612 &  13.606 &      13.327 \\
         4 &     24 &  27.439 &      25.786 \\
         5 &     40 &  64.022 &      46.660 \\
         6 &     72 & 121.105 &      82.646 \\
         7 &    126 & 223.144 &     140.384 \\
         8 &    240 & 408.386 &         240 \\
         9 &    272 & 722.629 &     380.244 \\
        10 &    336 &    1106 &     596.184 \\
        11 &    438 &    1736 &     916.228 \\
        12 &    756 &    2710 &    1418.098 \\
        13 &    918 &    4128 &    2233.684 \\
        14 &   1422 &    6348 &    3492.338 \\
        15 &   2340 &    9706 &    5431.321 \\
        16 &   4320 &   14640 &    8314.473 \\
        17 &   5346 &   21956 &   12219.420 \\
        18 &   7398 &   32812 &   17878.342 \\
        19 &  10668 &   48834 &   25902.460 \\
        20 &  17400 &   72390 &   37976.885 \\
        21 &  27720 &  107048 &   56854.921 \\
        22 &  49896 &  161620 &   86543.475 \\
        23 &  93150 &  244702 &  128104.059 \\
        24 & 196560 &  393120 &      196560 \\
        25 & 196656 &  530012 &  278379.872 \\
        26 & 196848 &  735550 &  396999.509 \\
        27 & 197142 & 1044424 &  560721.861 \\
        28 & 197736 & 1504584 &  790758.685 \\
        29 & 198506 & 2151982 & 1130937.482 \\
        30 & 200046 & 3075414 & 1654023.607 \\ [1ex] 
        \hline
    \end{tabular}
    \caption[New upper bounds for the average kissing number]{New upper bounds for the average kissing number in dimensions \(3\) through \(30\).  We compare with the known lower bounds and the previous upper bounds.  Previous upper bounds in dimensions \(3\) through \(9\) come from Dostert, Kolpakov, and Oliveira \cite{Dostert:2022}. The lower bound in dimension \(3\) comes from Eppstein, Kuperberg, and Ziegler \cite{Eppstein:2003}, and lower bounds in dimensions \(4\) through \(30\) come from lattice kissing number bounds from Conway and Sloane \cite[Table 1.2]{Conway:1999}. Previous upper bounds in dimensions \(10\) through \(30\) come from twice the kissing number upper bounds \cite{Cohn}.} \label{tab:NEW}
    \end{center}
\end{table}  

We note that all upper bounds so far (and our paper is not an exception) use only the local information about the packing; that is, the packing condition can be relaxed to "all balls tangent to the same ball do not overlap". It would be interesting to find arguments that genuinely use the non-overlapping condition akin to how it is done for the sphere packing problem \cite{Gorbachev:2000}, \cite{Cohn:2003}, \cite{Viazovska:2017}, \cite{Cohn:2017}. \cite[Theorem 4.1]{Dostert:2022} extends the framework of \cite{Cohn:2003} and \cite{deLaat:2014} to the problem of average kissing numbers but it works only when radii are taken from a closed interval or a finite set and, in practice, its bounds do not seem to improve bounds produced by local constraints.

The paper is structured as follows. In Section \ref{sect:shell}, we describe the shell approach of Kuperberg and Schramm and all the recent upper bounds stemming from it. In Section \ref{sect:LP} we describe the linear programming bound for regular kissing numbers. Our semidefinite program essentially contains the linear program of Delsarte as its diagonal slice. In Section \ref{sect:new} we develop a new approach based on auxiliary functions and explain why it generalizes previous attempts to solve the problem. In Section \ref{sect:SDP} we show how the new approach can be implemented as a semidefinite program. In Section \ref{sect:exact} we explain how we found exact polynomial certificates solving the average kissing number problem in dimensions 8 and 24. Finally, in Section \ref{sect:opt} we show how semidefinite programs can be extended to incorporate optimization over auxiliary functions as well and use it to find upper bounds in other dimensions. 

\section{Upper bounds via shell packings}\label{sect:shell}

In this section we briefly describe how average kissing numbers can be bounded via densities of auxiliary packings of spherical caps on spherical shells. We consider, without loss of generality, a unit sphere centered at the origin, \(S^{n-1}\). We then consider a spherical shell of radius \(\rho, 1<\rho<3\), centered at the origin. Thus, we denote this spherical shell by \(\rho S^{n-1}\). Let \(C_r\) be a ball of radius \(r>0\) that is tangent to the central ball. We consider the spherical cap formed by the intersection of this spherical shell and the tangent ball, \(\rho S^{n-1} \cap C_r\).  We denote the normalized volume of this spherical cap, formed by \(S_{\rho}(r)=\rho S^{n-1} \cap C_r\), by
\[
A_{n, \rho}(r):=\operatorname{Vol} (S_{\rho}(r))/\operatorname{Vol} (\rho S^{n-1}).
\]

The intersections of the tangent balls and the spherical shell form a spherical cap packing.  We are interested in the proportion of the spherical shell, \(\rho S^{n-1}\), that can be taken up by any given cap packing. For a dimension \(n\) and the radius of the spherical shell, \(\rho\), we denote by \(\text{dens}_n(\rho)\) the supremum over all possible cap packings of the proportion of the spherical shell that is covered by the spherical caps \(S_{\rho}(r)\). We clarify that the supremum is taken over packings induced by non-overlapping balls tangent to the unit ball. The key fact here is that \(A_{n, \rho}(r)+A_{n, \rho}(1/r)\) reaches its minimum at $r=1$. This was shown by Kuperberg and Schramm \cite{Kuperberg:1994} in dimension 3 and then by Glazyrin \cite{Glazyrin:2020} in all dimensions $n\geq 3$.
A simple double counting of these sums for all pairs of tangent balls leads to the following bound. 
\begin{theorem} \label{thm:densityKissingBound}
    Let \(n \ge 3\) and consider any \(1<\rho<3\). Then
    \begin{align*}
        \kappa_n \le \frac{\text{dens}_n(\rho)}{A_{n, \rho}(1)}.
    \end{align*}
\end{theorem}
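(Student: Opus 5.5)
Fix a finite packing $P$ with contact graph edge set $E$. The plan is to double count, over all ordered pairs of tangent balls, the cap areas each ball casts on a shell around the other. After that, the minimization fact quoted above turns the double count into the claimed inequality.

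\emph{Setup.} For each ball $B\in P$, rescale and translate so that $B$ becomes the unit ball centered at the origin. Scaling preserves tangency and non-overlap, and the ratio of radii of $B'$ to $B$ is scale invariant. The neighbors of $B$ then become balls tangent to the unit ball with pairwise disjoint interiors. A neighbor $B'$ of original radius $r'$ becomes a ball of radius $r'/r_B$, where $r_B$ is the radius of $B$. The intersections of these neighbors with $\rho S^{n-1}$ are spherical caps with disjoint interiors. Hence, by the definition of $\text{dens}_n(\rho)$,
\[
\sum_{B'\sim B} A_{n,\rho}(r_{B'}/r_B)\le \text{dens}_n(\rho).
\]
Summing over all $B\in P$, the left side becomes a sum over ordered tangent pairs, that is, over edges $\{B,B'\}\in E$ counted in both directions. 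This gives
\[
\sum_{\{B,B'\}\in E}\Big(A_{n,\rho}(t)+A_{n,\rho}(1/t)\Big)\le |P|\,\text{dens}_n(\rho),\qquad t=r_{B'}/r_B .
\]

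\emph{Applying the minimization fact.} By the fact cited from Kuperberg--Schramm and Glazyrin, $A_{n,\rho}(t)+A_{n,\rho}(1/t)\ge 2A_{n,\rho}(1)$ for every $t>0$. This is the step that carries the real content, but it is assumed from earlier in the paper. Consequently
\[
2|E|\,A_{n,\rho}(1)\le |P|\,\text{dens}_n(\rho).
\]
Since $1<\rho<3$, the unit ball tangent to $S^{n-1}$ meets the shell in a cap of positive measure, so $A_{n,\rho}(1)>0$ and we may divide. This yields $k(P)=2|E|/|P|\le \text{dens}_n(\rho)/A_{n,\rho}(1)$. Taking the supremum over finite packings $P$ gives the theorem.

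\emph{Points to check.} First, the cap $A_{n,\rho}(r)$ must be well defined (possibly zero) for every $r>0$. Second, caps from neighbors with disjoint interiors must have disjoint relative interiors on the shell; this is immediate, since the caps are subsets of the balls. The remaining issue is to confirm that $\text{dens}_n(\rho)$ is defined exactly as a supremum over such configurations of tangent balls. This is how the paper defines it, so no obstacle remains beyond the cited inequality.
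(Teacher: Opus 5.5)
Your argument is correct and is exactly the double counting the paper has in mind: bound the total cap proportion around each rescaled ball by $\text{dens}_n(\rho)$, sum over all balls, and use $A_{n,\rho}(t)+A_{n,\rho}(1/t)\ge 2A_{n,\rho}(1)$ on each contact edge. As the paper later remarks, this is also the special case $g(r)=A_{n,\rho}(r)/A_{n,\rho}(1)$ of Theorem~\ref{thm:AvgKissBound}.
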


The upper bound of Kuperberg and Schramm in dimension 3 just uses $\text{dens}_3(\rho)\leq 1$ and takes the optimal $\rho=\sqrt{3}$. Glazyrin used a nontrivial estimate for the density to improve the bound in dimension 3 and a trivial bound $\text{dens}_n(\rho)\leq 1$ to obtain the first bounds improving on $2\tau_n$ in dimensions 4 and 5. Dostert, Kolpakov, and Oliveira \cite{Dostert:2022} used semidefinite programming to obtain bounds on cap-packing density, and in doing so, found the following bounds for the average kissing numbers: \(\kappa_3 \le 13.606\), \(\kappa_4 \le 27.439\), \(\kappa_5 \le 64.022\), \(\kappa_6 \le 121.105\), \(\kappa_7 \le 223.144\), \(\kappa_8 \le 408.386\), and \(\kappa_9 \le 722.629\). This approach could not improve $2\tau_n$ in higher dimensions. Before this paper, these were the best known bounds for the average kissing number.

\section{Linear programming approach}\label{sect:LP}

As we mentioned above, all approaches so far use the local information of the packing, that is, the configuration of balls tangent to the same central ball. This configuration may be viewed as an extension of the regular kissing configuration.  Therefore, it makes sense that we may obtain bounds on the average kissing number by extending the approaches used to bound the kissing number.

As we generalize this extension to obtain our results, we see fit to describe the linear programming approach. It is based on positive semidefiniteness of Gegenbauer polynomials as zonal spherical functions. Throughout the paper, by Gegenbauer polynomials \(P_{k}^{n}\) we mean Jacobi polynomials $P_{k}^{(\alpha,\beta)}$ with equal parameters \(\alpha=\beta=(n-3)/2\), where \(k\) is the degree of the polynomial and the polynomials are normalized so that \(P_k^n(1)=1\).

Using the addition formula by Gegenbauer \cite{Gegenbauer:1893}, Schoenberg proved the following property of Gegenbauer polynomials \cite{Schoenberg:1942}.

\begin{proposition}\cite{Schoenberg:1942}\label{prop:schoenberg}
For any \(k \ge 1\) and for any set of points $X=\{x_1,\ldots, x_m\}\subset S^{n-1}$, the matrix \((P_k^n(x_i\cdot x_j))_{1 \le i,j \le m}\) is positive semidefinite.
\end{proposition}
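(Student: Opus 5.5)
The plan is to derive positive semidefiniteness from the addition formula for spherical harmonics. Let $\mathrm{Harm}_k$ be the space of harmonic homogeneous polynomials of degree $k$ on $\mathbb{R}^n$, restricted to $S^{n-1}$. Let $h=\dim \mathrm{Harm}_k$, and choose an orthonormal basis $Y_1,\dots,Y_h$ of $\mathrm{Harm}_k$ with respect to the normalized surface measure $\sigma$ on $S^{n-1}$. The key identity to establish or cite is the addition formula of Gegenbauer:
\[
\sum_{l=1}^{h} Y_l(x)Y_l(y) = h\,P_k^n(x\cdot y), \qquad x,y\in S^{n-1},
\]
with our normalization $P_k^n(1)=1$.

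Granting the identity, the conclusion is immediate. For any points $x_1,\dots,x_m$ and any real coefficients $c_1,\dots,c_m$,
\[
\sum_{i,j} c_i c_j P_k^n(x_i\cdot x_j)=\frac1h\sum_{l=1}^h\Bigl(\sum_i c_i Y_l(x_i)\Bigr)^2\ge 0 .
\]
Equivalently, the matrix $(P_k^n(x_i\cdot x_j))$ is the Gram matrix of the vectors $v_i=h^{-1/2}(Y_1(x_i),\dots,Y_h(x_i))\in\mathbb{R}^h$, so it is positive semidefinite.

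To justify the addition formula, I would argue in four steps.
\begin{enumerate}
\item Define the reproducing kernel $K(x,y)=\sum_l Y_l(x)Y_l(y)$. It does not depend on the choice of orthonormal basis.
\item $\mathrm{Harm}_k$ is invariant under $O(n)$, and the action preserves $\sigma$. Hence $K(gx,gy)=K(x,y)$ for all $g\in O(n)$.
\item Since $O(n)$ acts transitively on pairs of unit vectors with a prescribed inner product, $K(x,y)=F(x\cdot y)$ for some function $F$.
\item Fix $y$. Then $x\mapsto F(x\cdot y)$ lies in $\mathrm{Harm}_k$ and is invariant under the stabilizer of $y$, so it is a zonal harmonic. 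A zonal harmonic is a polynomial in $t=x\cdot y$ of degree $k$. Applying the Laplacian in these coordinates shows that it satisfies the Gegenbauer differential equation, whose polynomial solution is unique up to scaling. Therefore $F=cP_k^n$.
\end{enumerate}
To find the constant, set $x=y$ and integrate over $x$: $\int_{S^{n-1}} K(x,x)\,d\sigma(x)=\sum_l\|Y_l\|^2=h$. Since $K(x,x)=F(1)=c$ and $\sigma$ has total mass one, we get $c=h$. For the semidefiniteness claim itself the positive constant is irrelevant; only $c>0$ matters, and that follows from $K(x,x)=\sum_l Y_l(x)^2>0$ for some $x$.

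The main obstacle is step 4: identifying the zonal harmonic with the Jacobi polynomial $P_k^{((n-3)/2,(n-3)/2)}$. Rather than solving the ODE, I would show that the zonal functions of different degrees are orthogonal with respect to the weight $(1-t^2)^{(n-3)/2}$ on $[-1,1]$. This holds because $\mathrm{Harm}_k\perp\mathrm{Harm}_{k'}$ in $L^2(\sigma)$ for $k\neq k'$, and the pushforward of $\sigma$ under $x\mapsto x\cdot y$ has exactly this density. Together with the fact that each zonal function has degree exactly $k$, orthogonality pins it down up to scaling as the Gegenbauer polynomial. As a fallback, since this is a classical result, one may simply cite the Gegenbauer addition formula \cite{Gegenbauer:1893} together with Schoenberg \cite{Schoenberg:1942}.
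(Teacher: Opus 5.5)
Your proposal is correct and follows the same route as the paper. The paper gives no proof of its own: it attributes the result to Schoenberg via Gegenbauer's addition formula, and that is exactly your Gram-matrix identity $\sum_{i,j}c_ic_jP_k^n(x_i\cdot x_j)=\frac1h\sum_{l}\bigl(\sum_i c_iY_l(x_i)\bigr)^2\ge 0$. Your derivation of the addition formula is also sound, including the constant $c=h$. The one step you asserted without argument, that the degree-$k$ zonal function has degree exactly $k$ in $t$, follows from a dimension count: the zonal functions of degrees $0,\dots,k$ are nonzero, pairwise orthogonal for the weight $(1-t^2)^{(n-3)/2}$, and each has degree at most its index, so they span all polynomials of degree at most $k$.
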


The linear programming method was developed by Delsarte \cite{Delsarte:1973}. The version below is essentially due to Delsarte, Goethals, and Seidel \cite[Theorem 4.3]{Delsarte:1977}. Odlyzko and Sloane \cite{Odlyzko:1979} and, independently, Levenshtein \cite{Levenshtein:1979} used it to find kissing numbers in dimensions 8 and 24. The best asymptotic upper bounds for $\tau_n$ are also obtained using this approach \cite{Kabatyansky:1978}, \cite{OpenAI:2026}. The proof is extremely short and important for further discussion, so we include it here.

\begin{theorem} \label{thm:linProg}
    Assume \(n \ge 3\).  If \(f(t)\) is a real polynomial that satisfies:
    \begin{enumerate}
        \item \(f(t) \le 0\) for \(-1 \le t \le \frac{1}{2}\), \text{ and} 
        \item the coefficients in the expansion of \(f(t)\) in terms of Jacobi polynomials
        \begin{align*}
            f(t) =\sum_{i=0}^{d} f_i P_{i}^{n}(t)
        \end{align*}
        satisfy \(f_0>0, f_1 \ge 0,\ldots, f_d \ge 0\). 
    \end{enumerate}
    Then \(\tau_n\) is bounded by:
        \begin{align*}
            \tau_n \le \frac{f(1)}{f_0}.
        \end{align*}
\end{theorem}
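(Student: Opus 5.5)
The argument is the standard double-counting proof of Delsarte, Goethals, and Seidel. Take a kissing configuration of $N$ unit balls tangent to the central unit ball, and let $x_1,\ldots,x_N\in S^{n-1}$ be the unit vectors pointing to their centers. The first step is to translate the non-overlapping condition into a condition on inner products. Two outer balls of radius $1$ centered at $2x_i$ and $2x_j$ have disjoint interiors iff $|2x_i-2x_j|\ge 2$, i.e. $|x_i-x_j|\ge 1$. This is equivalent to $x_i\cdot x_j\le \tfrac12$ for $i\neq j$. Since the $x_i$ are unit vectors, also $x_i\cdot x_j\ge -1$. So every off-diagonal inner product lies in $[-1,\tfrac12]$, exactly where hypothesis (1) applies.

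The second step is to evaluate the double sum $S=\sum_{i,j=1}^{N} f(x_i\cdot x_j)$ in two ways.

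\emph{Upper bound.} Split $S$ into the diagonal terms, which contribute $N f(1)$, and the off-diagonal terms. By condition (1) each off-diagonal term is $\le 0$, so $S\le N f(1)$.

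\emph{Lower bound.} Expand $f$ in Gegenbauer polynomials:
\begin{align*}
S=\sum_{k=0}^{d} f_k \sum_{i,j} P_k^n(x_i\cdot x_j).
\end{align*}
Since $P_0^n\equiv 1$, the $k=0$ term equals $f_0N^2$. For $k\ge1$, \Cref{prop:schoenberg} says the matrix $(P_k^n(x_i\cdot x_j))$ is positive semidefinite. Hence its quadratic form evaluated at the all-ones vector, which is $\sum_{i,j}P_k^n(x_i\cdot x_j)$, is nonnegative. Multiplying by $f_k\ge 0$ keeps these terms nonnegative, so $S\ge f_0N^2$.

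Combining the two bounds gives $f_0N^2\le N f(1)$, and dividing by $Nf_0>0$ yields $N\le f(1)/f_0$. Taking $N=\tau_n$ (the maximum is attained by compactness, or one can simply apply the bound to every finite configuration) proves the theorem.

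There is no serious obstacle. The only points needing care are the geometric translation of tangency and non-overlap into the condition $x_i\cdot x_j\le\tfrac12$, and using the normalization $P_k^n(1)=1$ so that the diagonal contributes exactly $f(1)$ per point.
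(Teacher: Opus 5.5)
Your proof is correct and follows the paper's argument: you bound $\sum_{i,j} f(x_i\cdot x_j)$ above by $N f(1)$ using non-positivity off the diagonal, and below by $f_0N^2$ via \Cref{prop:schoenberg}. You also explicitly derive $x_i\cdot x_j\le\tfrac12$ from non-overlap, a step the paper takes for granted. One small correction to your closing remark: the diagonal contributes $f(1)$ simply because $x_i\cdot x_i=1$, and the normalization $P_k^n(1)=1$ is what you actually use to get $P_0^n\equiv 1$ in the $k=0$ term.
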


\begin{proof}
Let \(X=\{x_1,\ldots,x_m\}\) be a set of points in \(S^{n-1}\) such that \(x_i\cdot x_j\leq \frac 1 2\) for \(i\neq j\). On the one hand,
\[
\sum\limits_{i,j}^m f(x_i\cdot x_j)\leq \sum\limits_{i=1}^m f(x_i\cdot x_i) = mf(1)
\]
because \(f(x_i\cdot x_j)\leq 0\) for all \(i\neq j\). On the other hand,
\[
\sum\limits_{i,j}^m f(x_i\cdot x_j)=\sum\limits_{k=0}^d f_i \sum\limits_{i,j}^m P_{k}^{n}(x_i\cdot x_j)\geq f_0  \sum\limits_{i,j}^m P_{0}^{n}(x_i\cdot x_j) = f_0 m^2
\]
because \(\sum\limits_{i,j}^m P_{k}^{n}(x_i\cdot x_j) \geq 0\) by \Cref{prop:schoenberg}. Combining the two inequalities yields the required bound.
\end{proof}

\Cref{thm:linProg} can be turned into an optimization problem, where we minimize \(f(1)\) with the fixed value \(f_0=1\) subject to the two hypotheses. Then, we can write this as a linear program, of which any feasible solution is an upper bound for the kissing number.  

To bound kissing numbers, we are only concerned with interactions between unit balls.  However, for the average kissing number, we want to consider interactions between balls of any radii. We now briefly discuss how \Cref{thm:linProg} can be extended to include packings of balls with different radii.

The key ingredient of the Delsarte method is the nonnegativity of \(\sum\limits_{i,j=1}^m P_k^n(x_i\cdot x_j)\). All points $x_i$ correspond to unit balls tangent to the central ball and it is natural that all pairwise values $P_k^n(x_i\cdot x_j)$ participate in the sum with the same coefficient. However, in the case of balls of different radii, the tangent balls are not the same anymore. It makes sense to consider the sums with coefficients that depend on the radii. As in \Cref{thm:linProg} above, let \(X=\{x_1,\ldots,x_m\}\subset S^{n-1}\) be the set of tangency points of a set of non-overlapping balls with the central unit ball. Let the radii of these balls be \(r_1,\ldots, r_m\), respectively. Let \(F:(0,\infty)^2\to\mathbb{R}\) be a function of two variables such that every finite principal submatrix is positive semidefinite, that is, for any finite set \(\{u_1,\ldots,u_m\}\), \(\left(F(u_i,u_j)\right)_{i,j}^m\succeq 0\). Then the analog of the Delsarte condition \(\sum\limits_{i,j=1}^m P_k^n(x_i\cdot x_j)\geq 0\) is the following lemma.

\begin{lemma}\label{lem:delsarte_SDP}
Under the conditions described above,
\[
\sum\limits_{i,j=1}^m F(r_i,r_j) P_k^n(x_i\cdot x_j)\geq 0.
\]
\end{lemma}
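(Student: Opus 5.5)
The plan is to deduce the claim from the Schur product theorem. The two matrices involved are
\[
A=\bigl(F(r_i,r_j)\bigr)_{i,j=1}^m \quad\text{and}\quad B=\bigl(P_k^n(x_i\cdot x_j)\bigr)_{i,j=1}^m .
\]
\Cref{prop:schoenberg} gives \(B\succeq 0\). The matrix \(A\) is positive semidefinite by the standing assumption on \(F\).

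One point needs care: the radii \(r_1,\dots,r_m\) need not be distinct. The assumption on \(F\) is stated for finite sets \(\{u_1,\dots,u_m\}\), so I would first reduce to distinct values.
\begin{itemize}
\item Let \(u_1,\dots,u_s\) be the distinct values among the \(r_i\), and define \(\sigma\) by \(r_i=u_{\sigma(i)}\).
\item Let \(S\) be the \(m\times s\) 0/1 matrix with \(S_{i,\sigma(i)}=1\).
\item Then \(A=S\,\bigl(F(u_a,u_b)\bigr)_{a,b}\,S^{T}\).
\end{itemize}
Since \(\bigl(F(u_a,u_b)\bigr)_{a,b}\succeq 0\) by hypothesis, congruence gives \(A\succeq 0\). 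Equivalently, for any vector \(c\) we have \(c^TAc=\sum_{a,b}F(u_a,u_b)\,d_a d_b\ge 0\), where \(d_a=\sum_{\sigma(i)=a}c_i\). Symmetry of \(A\) comes for free, since a positive semidefinite real matrix is by convention symmetric, or one can use the quadratic-form version directly.

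With both matrices positive semidefinite, the Hadamard product \(A\circ B=\bigl(F(r_i,r_j)P_k^n(x_i\cdot x_j)\bigr)_{i,j}\) is positive semidefinite by the Schur product theorem. To justify this briefly:
\begin{itemize}
\item Write \(A=\sum_p a_pa_p^T\) and \(B=\sum_q b_qb_q^T\), using spectral decompositions with nonnegative weights absorbed into the vectors.
\item Then \(A\circ B=\sum_{p,q}(a_p\circ b_q)(a_p\circ b_q)^T\), which is a sum of positive semidefinite rank-one matrices.
\end{itemize}

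Finally, the required sum is \(\mathbf 1^T (A\circ B)\mathbf 1\), where \(\mathbf 1\) is the all-ones vector. This is nonnegative because \(A\circ B\succeq 0\).

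The only step needing care is the repeated-radii issue: the hypothesis on \(F\) should be applied to the multiset of radii, and this is handled by the reduction to distinct values above. Otherwise the argument is a direct application of the Schur product theorem.
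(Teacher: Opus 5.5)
Your proposal is correct and follows the same route as the paper, which proves the lemma in one line: apply the Schur product theorem to \(\bigl(F(r_i,r_j)\bigr)_{i,j}\) and the Schoenberg matrix \(\bigl(P_k^n(x_i\cdot x_j)\bigr)_{i,j}\), then sum all entries. Your extra steps are valid details the paper leaves implicit: the congruence \(A=S\bigl(F(u_a,u_b)\bigr)_{a,b}S^T\) handles repeated radii, and the rank-one decomposition proves the Schur product theorem.
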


\begin{proof}
Follows from \Cref{prop:schoenberg} by the Schur product theorem.
\end{proof}

For a finite number of radii, this lemma is the essential part of the paper by de Laat, Oliveira, and Vallentin \cite{deLaat:2014}. The version in which \(F\) is a kernel on an infinite index set is used in the paper by Dostert, Kolpakov, and Oliveira \cite{Dostert:2022}.

One more ingredient of \Cref{thm:linProg} is the constraint \(x_i\cdot x_j\leq \frac 1 2\) that we do not have in the case of different radii. This inequality is replaced by a quick calculation that controls inner products based on the radii of the tangent balls.

\begin{lemma}\label{lem:ip}
If two non-overlapping balls of radii \(r\) and \(r'\) are tangent to the central unit ball at points \(x\) and \(x'\), respectively, then
\[
x\cdot x'\leq \frac{1+r+r'-rr'}{1+r+r'+rr'}.
\]
\end{lemma}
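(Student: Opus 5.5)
Set the central unit ball at the origin. A ball of radius \(r\) tangent to it at \(x\in S^{n-1}\) has centre \(c=(1+r)x\), and similarly the second ball has centre \(c'=(1+r')x'\). The plan is to translate the non-overlapping condition into a lower bound on \(|c-c'|\), expand \(|c-c'|^2\) with the law of cosines, and solve the resulting linear inequality for \(t=x\cdot x'\).

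The non-overlapping condition is \(|c-c'|\ge r+r'\). Expanding gives
\[
|c-c'|^2=(1+r)^2+(1+r')^2-2(1+r)(1+r')\,t .
\]
The inequality \(|c-c'|^2\ge (r+r')^2\) is linear in \(t\), with negative coefficient \(-2(1+r)(1+r')\). Therefore
\[
t\le \frac{(1+r)^2+(1+r')^2-(r+r')^2}{2(1+r)(1+r')}.
\]

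Next we simplify the right-hand side. In the numerator, \((1+r)^2+(1+r')^2-(r+r')^2 = 2+2r+2r'-2rr'\). The denominator is \(2(1+r)(1+r')=2(1+r+r'+rr')\). Cancelling the factor \(2\) gives exactly the bound \(\frac{1+r+r'-rr'}{1+r+r'+rr'}\).

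There is no real obstacle. The only points to check are the algebraic simplification and that dividing by the positive quantity \(2(1+r)(1+r')\) preserves the direction of the inequality. As a sanity check, \(r=r'=1\) recovers the classical kissing constraint \(t\le \tfrac12\).
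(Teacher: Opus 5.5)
Your proposal is correct and follows essentially the same route as the paper: place the centres at \((1+r)x\) and \((1+r')x'\), use non-overlap to get \(|c-c'|\ge r+r'\), and solve the law-of-cosines relation for \(x\cdot x'\). Your algebra checks out, and you simply write out the simplification that the paper leaves implicit.
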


\begin{proof}
The centers of the three balls form a triangle with two side lengths \(1+r\), \(1+r'\) and the third side length \(\geq r+r'\). Finding the third side length by the law of cosines, we get the required inequality.
\end{proof}
In the rest of the paper we use \(\operatorname{ip}(r, r'):=\frac{1+r+r'-rr'}{1+r+r'+rr'}\).

\section{New approach via auxiliary functions}\label{sect:new}
While previous average kissing number upper bounds have come from \Cref{thm:densityKissingBound}, they rely on meaningful bounds on the cap packing density. This has led to significant improvements; however, relying on a volume argument presents limitations. Thus, we now present a new means for bounding average kissing numbers, which does not rely on the volumes of spherical caps. The new method is still based on the local configuration of one central ball and all balls in a packing that are tangent to it.
\begin{theorem} \label{thm:AvgKissBound}
Let \(g:(0,\infty) \rightarrow [0,\infty)\) be a function such that:
\begin{enumerate}
    \item For \(x \in (0,\infty)\), we have \(g(x)+g(\frac{1}{x}) \ge 2\), and
    \item For any configuration of non-overlapping \(n\)-dimensional balls of radii \(r_1,\ldots,r_k \in (0, \infty)\) tangent to an \(n\)-dimensional unit ball, we have \(g(r_1)+\ldots+g(r_k) \le M\).
\end{enumerate} 

Then, \(\kappa_n \le M\).  
\end{theorem}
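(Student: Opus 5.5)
Fix a finite packing $P$ with contact graph edge set $E$; we will show $2|E|\le M|P|$ by double counting a weighted sum over oriented edges. For each ordered pair $(B,B')$ of tangent balls with radii $R$ and $R'$, assign the weight $g(R'/R)$. This is the radius of $B'$ after rescaling so that $B$ becomes the unit ball.

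The first count groups oriented edges by their tail. Fix a ball $B\in P$ of radius $R$ and apply the homothety with ratio $1/R$ (composed with a translation) sending $B$ to the unit ball. Homotheties preserve tangency and the non-overlapping property, and the neighbors of $B$ are sent to non-overlapping balls of radii $R'/R$ tangent to the unit ball. Condition (2) of \Cref{thm:AvgKissBound} then gives
\[
\sum_{B'\sim B} g(R'/R)\le M .
\]
Summing over all $B\in P$, the total weight over oriented edges is at most $M|P|$.

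The second count groups the two orientations of each unordered edge $\{B,B'\}$. Their weights are $g(x)$ and $g(1/x)$ with $x=R'/R$, so by condition (1) each unordered edge contributes at least $2$. The total weight is therefore at least $2|E|$.

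Combining the two counts gives $2|E|\le M|P|$, that is, $k(P)\le M$. Taking the supremum over all finite packings yields $\kappa_n\le M$.

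The only step needing any care is the scaling invariance used in the first count, namely that condition (2) may be applied to each ball after normalizing it to unit radius. This holds because tangency and disjointness of interiors are preserved under similarities. The nonnegativity of $g$ is not even essential here, since all sums are finite. So I expect no real obstacle; the content of the theorem lies in producing a suitable $g$, not in this double counting.
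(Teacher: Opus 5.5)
Your proposal is correct and follows essentially the same argument as the paper. You rescale each ball to unit radius so that condition (2) bounds the sum of $g$ over its neighbors by $M$, sum over all balls, and then use condition (1) to show that the two orientations of each tangency contribute at least $2$, which gives $2|E|\le M|P|$.
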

\begin{proof}
Let \(P\) be any packing of \(N\) \(n\)-dimensional balls with radii \(r_1,\ldots, r_N\). Consider a ball of radius \(r_j\) and assume it is tangent to balls of radii \(r_{j,1}, \ldots, r_{j,k}\) from the packing \(P\). Note that, up to similarity, the configuration of balls of radii \(r_{j,1},\ldots, r_{j,k}\) tangent to the ball of radius \(r_j\) provides the same configuration as the balls of radii \(\frac{r_{j,1}}{r_j}, \ldots,\frac{r_{j,k}}{r_j}\) tangent to a unit ball. Then, by the hypothesis, \(g\left(\frac{r_{j,1}}{r_j}\right)+\ldots+g\left(\frac{r_{j,k}}{r_j}\right) \le M\).

Let $E$ be the set of edges in the contact graph of the packing, that is, $\{i,j\}\in E$ if the $i$-th and $j$-th balls are tangent to each other. We sum up these inequalities for all $j$, $1\leq j\leq N$. Then each edge $\{i,j\}$ contributes \(g\left(\frac {r_i}{r_j}\right) + g\left(\frac {r_j}{r_i}\right)\) to the sum and we get 

\[
\sum\limits_{\{i,j\}\in E} \left(g\left(\frac {r_i}{r_j}\right) + g\left(\frac {r_j}{r_i}\right)\right) \leq NM.
\]

It remains to notice that \(g\left(\frac {r_i}{r_j}\right) + g\left(\frac {r_j}{r_i}\right)\geq 2\) so $2|E|\leq NM$ and, subsequently \(\frac {2|E|}{N}\leq M\).

As this bound holds for any finite packing, we conclude that \(\kappa_n \le M\).
\end{proof}

\begin{remark}
    Note that this result immediately provides an alternative proof for the simple \(2 \tau_n\) bound on the average kissing number.  Namely, define:
    
    \begin{align*}
        g(x) =
\begin{cases}
0, & \text{if } x < 1, \\
1,  & \text{if } x = 1 \\
2, & \text{if } x>1
\end{cases}
    \end{align*}
 For any configuration of \(n\)-dimensional balls tangent to a unit ball, with radii \(r_1, \ldots, r_k\), we can consider \(g(r_1)+\ldots+g(r_k)\).  For any \(r_i<1\), we have \(g(r_i)=0\), and thus, at most \(\tau_n\) of these balls provide contribution towards this summation.  By definition, this contribution is at most \(2\). In other words, we have  \(g(r_1)+\ldots+g(r_k) \le 2 \tau_n\), implying that \(\kappa_n \le 2 \tau_n\).
\end{remark}

\begin{remark}
This approach strictly generalizes the shell approach of \Cref{thm:densityKissingBound}. Indeed, one can just define \(g(r)=\frac {A_{n,\rho(r)}} {A_{n,\rho(1)}}\). Then \(g(r)+g(1/r)\geq 2\) and \(g(r_1)+\ldots+g(r_k) \le \frac {\operatorname{dens_n(\rho)}}{A_{n,\rho(1)}}\).
\end{remark}

In order to find new upper bounds based on this approach, there are two main obstacles remaining. One is choosing a good auxiliary function, \(g\), subject to the hypotheses of \Cref{thm:AvgKissBound}. The second is that even having a function, \(g\), it is not immediately clear how we might rigorously bound the quantity \(g(r_1)+\ldots+g(r_k)\) for all possible configurations. Generalizing the approach in \cite{Dostert:2022} we estimate this quantity using \Cref{lem:delsarte_SDP}.

Before formulating the main results, we would like to analyze suitable auxiliary functions \(g\) and adjust them to simplify the program. If \(g(r_1)+\ldots+g(r_k) = M\) for some local configuration, which, for instance, is the case in dimensions 8 and 24, adding an arbitrary ball tangent to the unit ball and not overlapping with already present balls cannot possibly increase the sum. This means that \(g\) must be 0 on some interval \((0,x_0)\). Then the condition \(g(x)+g(1/x)\geq 2\) requires \(g(x)\geq 2\) for \(x\in (1/x_0,\infty)\). As we will see, exact equality for this interval is sufficient so we set \(g(x)=2\) for all \(x\in(1/x_0,\infty)\). In the active interval \([x_0, 1/x_0]\) we choose a more convenient parametrization of radii by setting \(c=\frac{1-x_0}{1+x_0}\) and taking \(r_c(u) =\frac{1+cu}{1-cu}\). This way \(r_c(-1)=x_0\) and \(r_c(1)=1/x_0\). The inequality \(g(x)+g(1/x)\geq 2\) in the new parametrization reads \(g(r_c(u))+g(r_c(-u))\geq 2\) for any \(u\in [-1,1]\). Finally, we define \(I_c(u,v)=\text{ip}(r_c(u), r_c(v))\) to describe the constraints from \Cref{lem:ip} in the new parametrization.

\begin{theorem} \label{thm:SDP}
    Let \(n \ge 3\) be an integer.  Choose \(0<x_0<1\), and set \(c=\frac{1-x_0}{1+x_0}\).  Let \(r_c:[-1,1] \rightarrow [x_0,1/x_0]\) be the increasing bijection defined by \(r_c(u) =\frac{1+cu}{1-cu}\), and then \(I_c(u,v)=\text{ip}(r_c(u), r_c(v))=\frac{1-c(u+v)-c^2uv}{2}\).  Let \(G:[-1,1] \rightarrow \mathbb{R}\) and \(\lambda>0\).  Fix an integer \(d>0\), and for \(k=0,\ldots,d\), let:
    \begin{align*}
        F_k:[-1,1]^2 \rightarrow \mathbb{R}.
    \end{align*}
    We write:
    \begin{align*}
        f(t, u, v)=\sum_{k=0}^{d}F_k(u,v) P_k^n (t) \hspace{1cm}\text{ for }t, u, v \in [-1,1]
    \end{align*}
    If:
    \begin{enumerate}
        \item\label{it:1} every principal submatrix of \(F_k\) is positive semidefinite for \(k=1,\ldots,d\)
        \item\label{it:2} every principal submatrix of \(F_0 -\frac{1}{\lambda}(G \otimes G)\) is positive semidefinite
        \item\label{it:3} \(f(t, u, v) \le 0\) whenever \(-1 \le t \le I_c(u,v)\)
        \item\label{it:4} \(f(1, u, u) \le G(u)\) for all \(u \in [-1,1]\)
        \item\label{it:5} \(G(u)+G(-u) \ge 2\) for all \(u \in [-1,1]\)
        \item\label{it:6} \(G(1) \ge 2\)
    \end{enumerate}
    Then \(\kappa_n \le \lambda\).
\end{theorem}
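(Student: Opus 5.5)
The plan is to apply \Cref{thm:AvgKissBound} with the auxiliary function $g:(0,\infty)\to[0,\infty)$ defined by
\[
g(x)=0 \text{ for } x<x_0,\qquad g(x)=\max\bigl(G(r_c^{-1}(x)),0\bigr) \text{ for } x\in[x_0,1/x_0],\qquad g(x)=2 \text{ for } x>1/x_0 .
\]
The value $M=\lambda$ then comes from a Delsarte-type computation that uses \Cref{lem:delsarte_SDP} and \Cref{lem:ip}.

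\textbf{Step 1: condition (1) of \Cref{thm:AvgKissBound}.} For $x\in[x_0,1/x_0]$, write $x=r_c(u)$. Then $1/x=r_c(-u)$, and by \eqref{it:5} we get $g(x)+g(1/x)\ge G(u)+G(-u)\ge 2$. If $x<x_0$, then $1/x>1/x_0$, so the sum is $0+2=2$, and symmetrically for $x>1/x_0$.

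A problem is that $G$ may be negative somewhere, which is why I take $g=\max(G,0)$. The main obstacle is then the second condition, since the Delsarte computation naturally produces a bound on $\sum G(u_i)$, not on $\sum \max(G(u_i),0)$. The fix is to run the argument only over the balls with $G(u_i)\ge 0$, i.e.\ over a subconfiguration. A subconfiguration of non-overlapping balls is still a valid configuration, so this is harmless.

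\textbf{Step 2: condition (2) of \Cref{thm:AvgKissBound}.} Take a configuration of tangent balls with radii $r_i$. The balls with $r_i<x_0$ contribute $0$. If some ball has $r_i>1/x_0$, I handle it by shrinking it: a smaller ball tangent at the same point remains non-overlapping. So I replace such a ball by one of radius $1/x_0$, i.e.\ $u=1$. By \eqref{it:6} its $G$-value is at least $2$, which equals its $g$-value. After discarding the balls with $g=0$, every remaining ball has a parameter $u_i\in[-1,1]$ with $g(r_i)\le G(u_i)$, and $G(u_i)\ge 0$.

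Let $x_1,\dots,x_m$ be the tangency points and set $S=\sum_i G(u_i)$. Consider $\Sigma=\sum_{i,j} f(x_i\cdot x_j,u_i,u_j)$.

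For the upper bound: by \Cref{lem:ip}, for $i\ne j$ we have $x_i\cdot x_j\le I_c(u_i,u_j)$, so by \eqref{it:3} the off-diagonal terms are $\le 0$. By \eqref{it:4} the diagonal terms satisfy $f(1,u_i,u_i)\le G(u_i)$. Hence $\Sigma\le S$.

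For the lower bound: expanding, $\Sigma=\sum_k\sum_{i,j}F_k(u_i,u_j)P_k^n(x_i\cdot x_j)$. For $k\ge1$ each inner sum is $\ge 0$ by \Cref{lem:delsarte_SDP} together with \eqref{it:1}. For $k=0$, where $P_0^n=1$, condition \eqref{it:2} gives
\[
\sum_{i,j}F_0(u_i,u_j)\ge \frac1\lambda\Bigl(\sum_i G(u_i)\Bigr)^2 = \frac{S^2}{\lambda}.
\]

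\textbf{Step 3: conclusion.} Combining the two bounds gives $S^2/\lambda\le S$. Since $S\ge 0$, this yields $S\le\lambda$. Therefore
\[
\sum_i g(r_i)\le S\le \lambda,
\]
and \Cref{thm:AvgKissBound} gives $\kappa_n\le\lambda$.

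The delicate points are all in Step 2. First, the truncation $\max(G,0)$ and the restriction to a subconfiguration must be done consistently. Second, replacing large balls by balls of radius $1/x_0$ requires checking that the shrunken ball still touches the central ball at the same point and overlaps nothing, which holds because the shrunken ball is contained in the original one.
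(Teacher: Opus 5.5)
Your proposal is correct and follows the paper's proof: it uses the same piecewise auxiliary function in \Cref{thm:AvgKissBound}, the same shrinking of balls larger than $1/x_0$ to $u=1$ using (6), and the same double counting $S^2/\lambda\le\Sigma\le S$. The only difference is how you handle a possibly negative $G$: you truncate to $g=\max\{G,0\}$ and pass to a subconfiguration, whereas the paper evaluates conditions (1), (2), (4) on the diagonal to get $G(u)^2/\lambda\le F_0(u,u)\le f(1,u,u)\le G(u)$, so $G\ge 0$ automatically (it then takes $g=\min\{G,2\}$), which makes your truncation step unnecessary but not wrong.
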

\begin{proof}
    We first want to show that \(G\) is non-negative on the domain.  Let \(u \in [-1,1]\).  Then, (\ref{it:1}) implies that \(F_k(u,u) \ge 0\) for \(k \ge 1\) and (\ref{it:2}) implies that \(F_0(u,u) -\frac{G(u)^2}{\lambda} \ge 0\).  Also, since \(P_k^n (1)=1\) for all \(k\),
    \begin{align*}
        f(1, u, u)=\sum_{k=0}^{d}F_k(u,u) \ge F_0 (u,u) \ge \frac{G(u)^2}{\lambda}
    \end{align*}
    By (\ref{it:4}), and since \(\lambda >0\), we have: \(0 \le \frac{G(u)^2}{\lambda} \le G(u)\) for all \(u \in [-1,1]\).
    
    We now define:
    \begin{align*}
        g(x) =
\begin{cases}
0, & \text{if } x < x_0, \\
\text{min}\{G(r_c^{-1}(x)), 2\},  & \text{if } x_0\le x \le 1/x_0 \\
2, & \text{if } x>1/x_0
\end{cases}
    \end{align*}
    We first show that \(g\) satisfies the condition of \Cref{thm:AvgKissBound}, that is, \(g(x)+g(1/x)\geq 2\). Let \(x \in (0, \infty)\).  By definition, \(g(x) \in [0,2]\).  Note that if \(x<x_0\) then \(1/x >1/x_0\).  So, \(g(x)+g(1/x) =2\).  If \(x>1/x_0\), then \(1/x<x_0\), so \(g(x)+g(1/x)=2\).  If \(x \in [x_0, 1/x_0]\), then \(1/x \in [x_0, 1/x_0]\) also.  So, \(r_c^{-1}(x)=u \in [-1,1]\).  We know that \(r_c^{-1}(1/x)=-u \in [-1,1]\) also.  So, \(g(x)+g(1/x) =\text{min}\{G(u), 2\} +\text{min} \{G(-u), 2\}\).  If both \(G(u), G(-u) \le 2\), then \(g(x)+g(1/x)=G(u)+G(-u) \ge 2\) (by (\ref{it:5})).  If one of \(G(u)\) or \(G(-u) \) is at least two, then we also have \(g(x)+g(1/x) \ge 2\).  So, for all \(x \in (0, \infty)\), we have \(g(x)+g(1/x) \ge 2\).  

    Now, consider a configuration, \(P\), of non-overlapping balls touching a central unit ball. Suppose the balls are of radii \(r_1, \ldots, r_m\). We want to bound \(g(r_1)+\ldots+g(r_m)\). This would lead to the hypotheses from \Cref{thm:AvgKissBound} being met, and thus would lead to a bound for the average kissing number. By definition of \(g\), any ball of radius less than \(x_0\) has no contribution to the summation, so, without loss of generality, we suppose that each \(r_i \ge x_0\).  

    Now, for each \(B \in P\), we call \(w_i \in S^{n-1}\) the point where \(B\) is tangent to \(S^{n-1}\). If \(B\) has radius \(r_i\) such that \(r_i \in [x_0, 1/x_0]\), then choose \(u_i \in [-1,1]\) such that \(r_c(u_i)=r_i\).  If \(r_i>1/x_0\), then choose \(u_i=1\) (basically, we replace the larger ball by a smaller one, such that the packing is still valid).  Now, the ordered pair, \((w_i, u_i)\) represents each ball in \(P\).  

    We now see that if \(r_i \in [x_0, 1/x_0]\), then \(g(r_i) =\text{min} \{G(u_i), 2\} \le G(u_i)\).  If \(r_i>1/x_0\), then \(g(r_i)=2 \le G(1)\) (by (\ref{it:6})).  So, we now have:
    \begin{align*}
        \sum_{i=1}^{m} g(r_i) \le \sum_{i=1}^{m} G(u_i)
    \end{align*}

    Define \(t_{ij} :=w_i \cdot w_j\), where \(t_{ii}=1\).  Consider:
    \begin{align*}
        f(t_{ij}, u_i, u_j)=\sum_{k=0}^{d}P_k^n (t_{ij})F_k(u_i, u_j)
    \end{align*}
    For \(k \ge 1\), 
    \[
    \sum\limits_{i,j=1}^m F_k(u_i,u_j) P_k^n(w_i\cdot w_j)\geq 0
    \]
    by Lemma \ref{lem:delsarte_SDP}.  

    Now consider that for \(k=0\), we have \(P_0^n \equiv 1\).  By (\ref{it:2}), we obtain that 
    \[
    \left(F_0(u_i, u_j) -\frac{G(u_i)G(u_j)}{\lambda}\right)_{1 \le i,j \le m}\succeq 0.
    \]
    So,
    \begin{align*}
        0 \le \mathbf{1}^T \left(F_0(u_i, u_j) -\frac{G(u_i)G(u_j)}{\lambda}\right)_{1 \le i,j \le m} \mathbf{1} =\sum_{1 \le i,j \le m}F_0(u_i, u_j) -\frac{1}{\lambda} \left(\sum_{i=1}^m G(u_i) \right)^2
    \end{align*}

    Then, we obtain:
    \begin{align*}
        \sum_{1 \le i,j \le m} f(t_{ij}, u_i, u_j) = \sum_{1 \le i,j \le m} \sum_{k=0}^{d} P_k^n(t_{ij})F_k(u_i, u_j) \ge \frac{1}{\lambda} \left(\sum_{i=1}^{m}G(u_i)\right)^2
    \end{align*}

    We now want to find an upper bound on this quantity.  First consider when \(i \ne j\).  We know that \(t_{ij} \le I_c(u_i, u_j)\), and we also know that \(t_{ij} \ge -1\).  By (\ref{it:3}), we then have
    \[
    f(t_{ij}, u_i, u_j) \le 0.
    \]
    When \(i=j\), then we have \(t_{ij}=1\), and by (\ref{it:4}), we have
    \[
    f(1, u_i, u_i) \le G(u_i).
    \]
    So:
    \begin{align*}
        \sum_{1 \le i,j \le m} f(t_{ij}, u_i, u_j) \le \sum_{i=1}^m G(u_i)
    \end{align*}
    We now have:
    \begin{align*}
        \frac{1}{\lambda} \left(\sum_{i=1}^{m}G(u_i)\right)^2 \le  \sum_{1 \le i,j \le m} f(t_{ij}, u_i, u_j) \le \sum_{i=1}^m G(u_i)
    \end{align*}
    Thus, this implies:
    \begin{align*}
        \sum_{i=1}^{m}G(u_i) \le \lambda
    \end{align*}
    We had previously shown that \(\sum_{i=1}^{m} g(r_i) \le \sum_{i=1}^{m} G(u_i)\). Now, by \Cref{thm:AvgKissBound}, we conclude that \(\kappa_n \le \lambda\).
\end{proof}

\begin{remark}
    We note that our choice of the increasing bijection \(r_c\) was deliberate. While \Cref{thm:SDP} could be stated with any increasing bijection, our specific choice simplifies condition (\ref{it:5}) and makes the constraining function \(I_c(u,v)\) polynomial in \(u\) and \(v\). 
\end{remark}

The following theorem is a modification of \Cref{thm:SDP} that is more convenient for calculations in some cases.

\begin{theorem}\label{thm:SDPCor}
    Let \(a:[-1,1] \rightarrow \mathbb{R}\), let \(F_0, \ldots, F_d:[-1,1]^2\to\mathbb{R}\), let \(M >0\), and let \(f, x_0, c\), \(r_c\) and \(I_c\) be defined as in \Cref{thm:SDP}.
    If:
    \begin{enumerate}
        \item\label{it:1cor} every principal submatrix of \(F_k\) is positive semidefinite for \(k=1,\ldots,d\)
        \item\label{it:2cor} every principal submatrix of \(F_0 -a \otimes a\) is positive semidefinite
        \item\label{it:3cor} \(f(t, u, v) \le 0\) whenever \(-1 \le t \le I_c(u,v)\)
        \item\label{it:4cor} \(f(1, u, u) \le M\) for all \(u \in [-1,1]\)
        \item\label{it:5cor} \(a(u) \ge 0\) for all \(u \in [-1,1]\)
        \item\label{it:6cor} \(a(u)^2+a(-u)^2 \ge 2\) for all \(u \in [-1,1]\)
        \item\label{it:7cor} \(a(1)^2 \ge 2\)
    \end{enumerate}
    Then \(\kappa_n \le M\).
    
\end{theorem}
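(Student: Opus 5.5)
The plan is to deduce \Cref{thm:SDPCor} from \Cref{thm:SDP} by a change of variables: we reweight all kernels by the nonnegative rank-one kernel \(a\otimes a\) and take \(G=a^2\). Concretely, with \(\lambda:=M\), we define
\[
G(u):=a(u)^2,\qquad \widetilde F_k(u,v):=\frac{1}{M}\,a(u)a(v)\,F_k(u,v)\quad (k=0,\ldots,d),
\]
and set \(\widetilde f(t,u,v)=\sum_{k=0}^d \widetilde F_k(u,v)P_k^n(t)=\frac{1}{M}a(u)a(v)f(t,u,v)\). We keep the same \(x_0\), \(c\), \(r_c\) and \(I_c\). It then suffices to verify hypotheses (\ref{it:1})--(\ref{it:6}) of \Cref{thm:SDP} for \(\widetilde F_k\), \(G\), \(\lambda=M\), which gives \(\kappa_n\le M\).

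Hypotheses (\ref{it:1}) and (\ref{it:2}) follow from the Schur product theorem. Every principal submatrix of \(a\otimes a\), namely \((a(u_i)a(u_j))_{i,j}\), is positive semidefinite because it has rank one. So \(\widetilde F_k\) for \(k\ge1\) is a Hadamard product of two kernels whose principal submatrices are all PSD, scaled by \(1/M>0\), and (\ref{it:1cor}) gives (\ref{it:1}). For (\ref{it:2}) we use the identity
\[
\widetilde F_0-\tfrac{1}{M}\,G\otimes G=\tfrac1M\,(F_0-a\otimes a)\circ(a\otimes a),
\]
which holds because \(a(u)^2a(v)^2=(a(u)a(v))\cdot(a(u)a(v))\). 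Hypothesis (\ref{it:2cor}) and the Schur product theorem then give (\ref{it:2}).

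Hypothesis (\ref{it:3}) is where nonnegativity (\ref{it:5cor}) is used. Whenever \(-1\le t\le I_c(u,v)\), we have \(\widetilde f(t,u,v)=\frac1M a(u)a(v)f(t,u,v)\le 0\), since \(a(u)a(v)\ge0\) and \(f(t,u,v)\le0\) by (\ref{it:3cor}). For (\ref{it:4}), \(\widetilde f(1,u,u)=\frac1M a(u)^2 f(1,u,u)\le a(u)^2=G(u)\) by (\ref{it:4cor}), again using \(a(u)^2\ge0\). Finally, (\ref{it:5}) and (\ref{it:6}) are exactly (\ref{it:6cor}) and (\ref{it:7cor}) rewritten with \(G=a^2\).

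The only delicate point is choosing this reweighting. A direct imitation of the proof of \Cref{thm:SDP} with \(a\) in place of \(G\) fails: it only gives \((\sum a(u_i))^2\le mM\), which is useless. Multiplying by \(a(u)a(v)\) fixes this, because it converts the uniform diagonal bound \(M\) into the weight \(a(u)^2\) that the argument needs. The rest is routine bookkeeping, and we rely on \(\lambda=M>0\), which the statement assumes.
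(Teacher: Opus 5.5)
Your proposal is correct and matches the paper's proof: the paper also sets $G=a^2$, $\lambda=M$ and $F_k^\ast(u,v)=\frac{a(u)a(v)}{M}F_k(u,v)$, then checks the hypotheses of \Cref{thm:SDP}. The only cosmetic difference is in the positive semidefiniteness step, which the paper writes as the diagonal congruence $\frac1M D K_k D$ with $D=\operatorname{diag}(a(u_1),\ldots,a(u_m))$; this is the same as your Hadamard product with the rank-one matrix $\mathbf{a}\mathbf{a}^T$.
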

\begin{proof}
    Define
    \[
    F_k^\ast(u,v) =\frac{a(u)a(v)}{M}F_k(u,v)
    \]
    for \(k=0,\ldots,d\), and
    \[
    f^\ast(t, u, v)=\frac{a(u)a(v)}{M}f(t, u, v).
    \]
    Define \(G(u)=a(u)^2\) for all \(u \in [-1,1]\). Set \(\lambda =M\).  

    Let \(\{u_1,\ldots, u_m \} \subset [-1,1]\). Define \(D\) to be the diagonal matrix with entries \(a(u_1),\ldots, a(u_m)\). Denote \[
    K_k=\bigl(F_k(u_i,u_j)\bigr)_{i,j=1}^{m},\quad
    K_k^\ast=\bigl(F_k(u_i,u_j)\bigr)_{i,j=1}^{m},\quad
    \mathbf a=(a(u_i))_{i=1}^{m},\quad
    \mathbf G=(G(u_i))_{i=1}^{m}.
    \]
    Now,
    \[
    K_k^*=\frac1M DK_kD.
    \]
    Because every principal submatrix of \(F_k\) is positive semidefinite, every principal submatrix of \(F_k^\ast\) is positive semidefinite too.  
    \[
    K_0^*-\frac1M\mathbf G\mathbf G^T=
    \frac1M D\bigl(K_0-\mathbf a\mathbf a^T\bigr)D.
    \]
    Since every principal submatrix of \(F_0-a \otimes a\) is positive semidefinite, every principal submatrix of \(F_0^\ast -\frac{1}{M}G \otimes G\) is positive semidefinite too.
    
    We see that for the relevant domain, \(f \le 0\). Then, for the relevant domain, we have \(f^\ast =\frac{a(u)a(v)}{M} f\le 0\), as \(a(u), a(v), M \ge 0\).  

    Now, consider
    \[
    G(u)-f^\ast(1, u, u)=\frac{a(u)^2}{M}(M-f(1, u, u)).
    \]
    By (\ref{it:4cor}), \(M-f(1, u, u) \ge 0\), and also \(a(u)^2, M \ge 0\).  Thus, \(G(u) \ge f^\ast(1, u, u)\) for all \(u \in [-1,1]\). Also, \(G(u)+G(-u)=a(u)^2+a(-u)^2 \ge 2\) by (\ref{it:6cor}).  And \(G(1)=a(1)^2 \ge 2\) by (\ref{it:7cor}). Therefore, all assumptions are met for \Cref{thm:SDP}, and thus we have that \(\kappa_n \le M\).  
    
    \end{proof}

    \begin{remark}
    We note that Theorems \ref{thm:SDP} and \ref{thm:SDPCor} are actually equivalent in the sense that they prove the same bound using the same function \(g\) in \Cref{thm:AvgKissBound}. In fact, we initially began with \Cref{thm:SDPCor} (rather, a slightly more general formulation) in our attempts to bound average kissing numbers. However, \Cref{thm:SDP} has a couple of advantages so we list it as the first, main result. A typical application of both theorems would search for functions \(G\) and \(a\) in the space of polynomials of a fixed degree. In this truncated case the connection \(G=a^2\) means that \Cref{thm:SDPCor} takes care only of polynomials \(G\) that are exact squares.
    Moreover, condition (\ref{it:6cor}) from \Cref{thm:SDPCor} is \(a(u)^2+a(-u)^2 \ge 2\) which cannot be transformed to an SDP constraint. If we want to optimize over \(a\), we need certain tricks to overcome this. Condition (\ref{it:5}) is \(G(u)+G(-u)\geq 2\) and can be easily interpreted as an SDP constraint.
    \end{remark}

    \begin{remark}
    Note that \Cref{thm:SDPCor} is a direct generalization of \cite[Theorem 3.1]{Dostert:2022} after certain reparametrizations and normalizations. If a function \(a(u)\) satisfies the required conditions of \cite[Theorem 3.1]{Dostert:2022}, most importantly, \(a(u)\geq A_{n,\rho}(r(u))^{1/2}\), one can easily modify it to \(a^*(u)=\frac{a(u)}{A_{n, \rho}(1)^{1/2}}\) satisfying the conditions of \Cref{thm:SDPCor}. We also note that there is a straightforward alternative proof of \Cref{thm:SDPCor} that follows the steps of \cite[Theorem 3.1]{Dostert:2022} combined with our \Cref{thm:AvgKissBound}. Of course our approach provides a lot more freedom in choosing auxiliary functions because they do not have to be tied to volumes of spherical caps at all.
    \end{remark}

    At the end of this section we discuss the limitations of Theorems \ref{thm:SDP} and \ref{thm:SDPCor}. In particular, one cannot possibly get an upper bound on average kissing numbers that is below the linear programming bound \(f(1)/f_0\) from the Delsarte method in \Cref{thm:linProg}. Indeed, we consider a restriction to configurations of only unit balls. In our parametrization it means that we consider \(u=v=0\) only and the univariate polynomial \(f(t)=f(t, 0,0)=\sum_{k} F_k(0,0)P_k^n(t)\). This polynomial satisfies all conditions of \Cref{thm:linProg}. Indeed, \(f_k=F_k(0,0) \ge 0\), \(f(t)=f(t,0,0) \le 0\) for \(-1 \le t \le I_c(0,0) = \tfrac12\). Since \(f(1)\leq G(0)\) and \(f_0=F_0(0,0) \ge G(0)^2/\lambda\), \Cref{thm:linProg} provides the bound
    \[
    \tau_n\leq \frac{f(1)}{f_0}\leq \frac {G(0)} {G(0)^2/\lambda} = \frac {\lambda} {G(0)} \leq \lambda
    \]
    because \(G(0) \ge 1\). In dimensions 2, 8, and 24, the Delsarte method provides sharp linear programming bounds for kissing numbers. If our approach produces the same exact bounds in these dimensions, then the exact solution must contain an optimal polynomial for the classical linear-programming bound as its unit radii slice.
    
    As we will see below, there are exact solutions of the average kissing number problem in dimensions 8 and 24. However, somewhat surprisingly, the method does not work in dimension 2. Moreover, \Cref{thm:AvgKissBound} cannot possibly produce an exact bound \(\kappa_2\leq 6\). This can be shown by iteratively estimating \(x_0\) for a function \(g\) until we reach a contradiction.

    \begin{proposition}\label{prop:6}
    There is no function \(g\) satisfying the conditions of \Cref{thm:AvgKissBound} for two-dimensional balls with \(M=6\).
    \end{proposition}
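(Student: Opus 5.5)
The plan is to argue by contradiction. Suppose $g:(0,\infty)\to[0,\infty)$ satisfies $g(x)+g(1/x)\ge 2$, and every configuration of disks tangent to the unit disk has $\sum g(r_i)\le 6$. We will repeatedly exhibit configurations for which the bound $6$ is attained exactly. In such a configuration, any additional disk that still fits must contribute $0$. This enlarges an interval $(0,x_k]$ on which $g$ vanishes, and hence, by condition (1) of \Cref{thm:AvgKissBound}, an interval $[1/x_k,\infty)$ on which $g\ge 2$. We then show the intervals grow until they collide with the value at $1$.

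First, the six unit disks around a unit disk give $6g(1)\le 6$. Together with $2g(1)\ge 2$ this yields $g(1)=1$, and this configuration is tight. Each of its six curvilinear gaps admits an extra disk tangent to the central one, of radius up to $x_1$ determined by Descartes' theorem: curvature $3+2\sqrt3$, so $x_1=1/(3+2\sqrt3)\approx0.155$. Adding such a disk of any radius $x\le x_1$ cannot raise the sum above $6$, so $g=0$ on $(0,x_1]$ and therefore $g\ge 2$ on $[1/x_1,\infty)$.

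Next comes the inductive step. Knowing $g=0$ on $(0,x_k]$ and $g\ge2$ on $[1/x_k,\infty)$, we use configurations containing large disks. The model case is two disks of radius $R\ge 1/x_k$ tangent to the central disk from nearly opposite sides, together with the disks of radius close to $1$ that fit in the two remaining lateral gaps. In the limit $R\to\infty$ these two big disks become two parallel lines at distance $2$, and the lateral gaps accommodate unit disks. The sum is then at least $2+2+g(1)+g(1)=6$, so it is tight. The leftover gaps in this configuration, between a big disk, a lateral disk and the central disk, admit disks of a radius $x_{k+1}$ that we can compute explicitly. This forces $g=0$ on $(0,x_{k+1}]$. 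We also mix in configurations with one big disk and several disks of radius $y$ paired with $1/y$, using $g(y)+g(1/y)\ge2$ to get lower bounds of the form $a\,g(y)+b\,(2-g(y))$. The aim is to show $x_{k+1}>x_k$ by a definite amount, with a recursion $x_{k+1}=\phi(x_k)$ coming from tangency/Descartes computations.

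The final step is to show the recursion eventually produces a contradiction. One route is $x_k\ge 1$, so $g(1)=0$, contradicting $g(1)=1$. Another is a configuration whose sum is forced to exceed $6$, for instance three disks of radius $\ge 1/x_k$ tangent to the central disk, contributing $\ge6$, plus a non-negligible leftover disk of radius $>x_k$ that fits in a gap. I expect the main obstacle to be exactly this quantitative part: choosing, at each stage, a tight configuration whose gaps leave strictly larger room, and verifying that the iteration does not converge to a fixed point below the threshold. I would first attempt the three-large-disks configuration. Once $1/x_k$ is small enough that three disks of radius $1/x_k$ fit around the unit disk, which needs half-angle $\arcsin\frac{R}{1+R}<\pi/3$, i.e.\ $R<3$, they contribute $6$. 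Any further disk in the gaps, of radius above the current $x_k$, then contributes positively unless $x_k$ grows further. Pushing this to the point where a gap disk of radius $\ge1$ fits, or where a fourth large disk fits, gives the contradiction.
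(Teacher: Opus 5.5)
Your overall strategy is the same as the paper's. You force $g(1)=1$, use tight configurations to make $g$ vanish on $(0,x_k]$ and hence $g\ge 2$ on $[1/x_k,\infty)$, and aim for a configuration whose sum exceeds $6$. Your first step matches the paper exactly, with $x_1=1/(3+2\sqrt3)=\frac{2\sqrt3}{3}-1$.

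Everything after that is only a plan. You never compute an $x_2$, never exhibit a configuration with sum $>6$, and you leave open whether the iteration stalls at a fixed point. That quantitative part is the whole content of the proposition, so as written this is not a proof. One of your closing criteria is also wrong as stated: a leftover disk of radius merely $>x_k$ gives no contradiction, since $g$ there is only known to be $\ge 0$. You need a disk whose $g$-value is bounded below, namely a unit disk or one of radius $\ge 1/x_k$.

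The computation that would have shown how short the argument is contains an error. The condition $\arcsin\frac{R}{1+R}<\frac{\pi}{3}$ is equivalent to $R<\frac{\sqrt3}{2-\sqrt3}=3+2\sqrt3$, not $R<3$. So three mutually tangent disks of radius exactly $1/x_1=3+2\sqrt3$ already fit around the central disk, and since each has $g\ge 2$, this configuration is tight immediately after step one. The paper proceeds in two explicit steps:
\begin{enumerate}
\item By Descartes' theorem, the gap between two of these disks and the central disk holds a disk of curvature $2\sqrt3-1$, i.e.\ radius $x_2=\frac{2\sqrt3+1}{11}$. Hence $g=0$ on $(0,x_2]$ and $g\ge 2$ on $[2\sqrt3-1,\infty)$.
\item Three disks of radius $2\sqrt3-1$ at polar angles $\frac{5\pi}{12},\pi,\frac{19\pi}{12}$, together with a unit disk at angle $0$, are pairwise non-overlapping (check with $\operatorname{ip}$). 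Their sum is $\ge 3\cdot 2+1=7>6$.
\end{enumerate}
No recursion or fixed-point analysis is needed.

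Your own big--unit--big--unit configuration would also finish in one step if you computed it. With $R=3+2\sqrt3$ and consecutive separations $\arccos\frac{1}{1+R}\approx 82.3^\circ$, the remaining $\approx 30.8^\circ$ of slack admits a disk of radius $0.49$ between a unit and a big disk. Then $g\ge 2$ on $[1+\sqrt2,\infty)$, and four mutually tangent disks of radius $1+\sqrt2$ give a sum $\ge 8$.
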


    \begin{proof}
    Assume such \(g\) exists. Then \(g(1)=1\) due to the kissing configuration. In this configuration we can add a disk of radius \(x_1=\frac {2\sqrt{3}} 3 - 1\) (the radii here and below are calculated by Descartes' circle theorem; see \cite{Lagarias:2002}). Therefore, \(g(x)=0\) on \((0,x_1]\). Then \(g(x)\geq 2\) on \([1/x_1,\infty )\). Note that we can take three non-overlapping disks of radius \(1/x_1\) tangent to the unit disk so \(3g(1/x_1)\) must be 6. Now for this configuration, we can take a disk of radius \(x_2=\frac {2\sqrt{3}+1} {11}\) tangent to the unit disk and not overlapping with three larger disks.  Therefore, \(g(x)=0\) on \((0,x_2]\). Consider now the following configuration of three disks of radii \(1/x_2=2\sqrt{3}-1\approx 2.46\) and one unit disk tangent to the unit disk. In a polar coordinate system with the origin at the center of the central disk, centers of larger disks are taken with polar angles \(\frac {5} {12} \pi, \pi, \frac {19} {12} \pi\) and the center of the unit disk is taken with polar angle 0. Using the calculation from \Cref{lem:ip}, we can see that the disks do not overlap. Their total contribution in terms of values of \(g\) is at least 7 so we get a contradiction. 
    \end{proof}
    
    \section{Initial formulation of SDP}\label{sect:SDP}
    In this section we describe the first results using our approach and the initial formulation of the semidefinite program based on \Cref{thm:SDPCor}.

    As our first attempt to improve upper bounds for average kissing numbers, we developed the following scheme. We first fix the cutoff value \(x_0\) and choose a function \(g\) on the interval \([x_0,1/x_0]\) such that \(g(x)+g(1/x)\geq 2\) and the function itself is "nice" in the sense that \(g(x)+g(1/x)\) is really close to 2 for all \(x\). The tradeoff for "niceness" is the fact that we cannot easily estimate the constant \(M\) in \Cref{thm:AvgKissBound}. In order to do that for the next step we find a polynomial function \(a\) approximating \(g^{1/2}\) from above and use \Cref{thm:SDPCor}. See the Master's thesis of Bockhaus \cite{Bockhaus:2026} for a full discussion on this method and its implementation. With this approach, we obtained the following new bounds on average kissing numbers: \(\kappa_3 \le 13.395, \kappa_4 \le 26.174, \kappa_5 \le 48.160, \kappa_6 \le 85.838, \kappa_7 \le 143.169, \kappa_8 \le 240.433, \kappa_9 \le 384.527, \kappa_{10} \le 608.127, \kappa_{11} \le 951.419\) and \(\kappa_{12} \le 1488.431\) \cite{Bockhaus:2026}.

    These results notably include \(\kappa_8 \le 240.433\).  The \(E_8\) lattice implies \(\kappa_8 \ge 240\). With a relatively small gap between lower and upper bounds, we conjectured that an exact solution could be obtained.  However, using an upper bound approximation for \(a\)  introduces error that prevents an exact solution.  Our next attempt at improving bounds involved choosing \(a\) directly. Let \(a(u)=\sum_{i=0}^N \alpha_i u^i\), where \(\alpha =(\alpha_0,\ldots, \alpha_N)\) determines \(a\). We first formulate the optimization problem described by \Cref{thm:SDPCor}, and then discuss the choices of an appropriate \(a\).

    Recall that we have kernels \(F_k:[-1,1]^2 \rightarrow \mathbb{R}\) and the function
    \[
    f(t,u,v) =\sum_{k=0}^{d} F_k(u,v)P_k^{n}(t).
    \]
    In practice, we parametrize the kernels \(F_k\) by matrices \(A_k\) and define:
\begin{align*}
    F_k(u,v) = \sum_{i,j =0}^{N} A_{k, ij} u^i v^j, \hspace{.5cm} \text{for all } u, v \in [-1,1].
\end{align*}

If \(A_k\) is a positive semidefinite \((N+1) \times(N+1)\) matrix, for all \(k=0,\ldots,d\), then all principal submatrices of \(F_k\) are positive semidefinite. Similarly, if \(A_0 -\alpha \alpha^T\) is positive semidefinite, then all principal submatrices of \(F_0-a \otimes a\) are positive semidefinite. Thus, we achieve the first two assumptions from \Cref{thm:SDPCor} by requiring two positive semidefinite conditions.

We still need to meet the third assumption: non-positivity of \(f\) on the relevant domain.  First, we see that \(f\) is a polynomial of \(u, v,\) and \(t\), and for \(-1 \le t \le I_c(u,v)\), we require non-positivity.  From this inequality, we see that we must have \(s_1(t, u, v)=t+1 \ge 0\) and \(s_2(t, u,v)=I_c(u,v) -t \ge 0\).  We want to ensure that \(u,v \in [-1,1]\) holds, and see this is true if and only if \(s_3(t, u, v)=(1-u^2)+(1-v^2) \ge0\) and \(s_4(t, u, v)=(1-u^2)(1-v^2) \ge 0\).  Thus, we can define the domain for which \(f\) needs to be non-positive as \(\{(t, u, v): s_i(t, u, v) \ge 0, i=1,2,3,4\}\).  

As we are trying to find an optimal \(f\) that is non-positive on this region, we may want to decrease the possible options, to reduce the size of the program.  We do this by introducing a sum-of-squares condition. Namely, we require sum-of-squares polynomials, \(q_1, q_2, q_3, q_4, q_5\) such that:
\begin{align} \label{eq:4.1}
f=-s_1q_1-s_2q_2-s_3q_3-s_4q_4-q_5
\end{align}
Thus, by construction, we will have that \(f\) is non-positive on our domain.  Note that the sum-of-squares condition is also useful, as saying that a polynomial, \(q(x)\), is a sum-of-squares is equivalent to saying that \(q(x)=m(x)^T Q m(x)\) where \(Q\) is a positive semidefinite matrix, and \(m(x)\) is a vector of monomials. This sufficient sum-of-squares condition can be represented by positive semidefinite constraints.  

Lastly, we consider the objective of the program: minimizing the maximum of \(f(1, u,u )\), as this will provide our upper bound on the average kissing number. We use the Markov-Luk\'{a}cs theorem \cite{Lukacs:1918}, \cite[Theorem 1.21.1]{Szego:1975} to obtain that this maximum is the same value as the minimum \(M\) such that:
\begin{align} \label{eq:4.2}
    M - f(1, u, u) =l_1(u)+(1-u^2)l_2(u),
\end{align}
where \(l_1\) and \(l_2\) are univariate sum-of-squares polynomials. Therefore, we now have our final problem as follows: 
{
\abovedisplayskip=0pt
\begin{align} 
    \text{minimize } &M  \label{eq:SDP} \\
    & f(t, u, v)=\sum_{k=0}^{d}\sum_{i,j =0}^{N} A_{k, ij} u^i v^j P_{k}^{n}(t) \notag \\
    &q_1, \ldots, q_5 \text{ are sum-of-squares polynomials satisfying } (\ref{eq:4.1}) \notag \\
    & l_1 \text{ and } l_2 \text{ are sum-of-squares polynomials satisfying }(\ref{eq:4.2}) \notag \\ 
    &A_0 -\alpha \alpha^T \text{ is positive semidefinite} \notag \\
    & A_k \in  \mathbb{R}^{(N +1) \times (N+1)} \text{ is positive semidefinite for }k=0,\ldots,d \notag
\end{align} }
Note that this problem is similar to that used by Dostert, Kolpakov, and Oliveira \cite{Dostert:2022}. The difference comes from \Cref{thm:AvgKissBound}, which provides more flexibility for the choice of the weight function, \(a\), and from the bijection \(r_c\). This approach improves the previous upper bounds in dimensions 3 through 30 and even solves the average kissing number problem in 8 and 24 dimensions.

\section{Exact solutions}\label{sect:exact}
In this section we explain how exact polynomial certificates in 8 and 24 dimensions were found. We want to mention that the choice of appropriate \(x_0\) and \(a\) was essentially a long trial-and-error process in which we used assistance from AI tools. There seems to be an interval of values \(x_0\) and a whole variety of auxiliary functions \(a\) which confirm the exact bound. For the exposition below we have chosen the least complex values.

\subsection{Choice of parameters}
We need to find an appropriate weight function \(a\) satisfying the hypotheses of \Cref{thm:SDPCor}. Therefore, we need \(a(u) \ge 0\) for all \(u \in [-1,1]\), \(a(1)^2 \ge 2\), and \(a(u)^2+a(-u)^2 \ge 2\) for all \(u \in [-1,1]\).  Also, as we wish to obtain exact bounds (meaning we achieve the linear programming bound), we need to assign balls of unit radius a weight of exactly one. This means we need \(a(0)=1\).  Looking for valid weight functions we consider polynomials of degree two in attempts to find a simpler solution.

The last condition \(a(0)=1\) gives us a polynomial of the form \(a(u)=1+a_1u+a_2u^2\).  Consider that we need \(a(u)^2+a(-u)^2 \ge 2\), that is, \(2(a_1^2+2a_2)u^2+2a_2^2u^4 \ge 0\). We would like the left-hand side to be as close to 0 as possible so we want to eliminate the quadratic term. Hence we choose \(a_2=-\frac{1}{2}a_1^2\). With this choice, \(a(u)^2+a(-u)^2-2=\frac{1}{2}a_1^4u^4 \ge 0\), for all choices of \(a_1\).  

We now consider the options for \(a_1\).  From \(a(1)^2 \ge 2\), we see that we must have \(1+a_1-\frac{1}{2}a_1^2 \ge \sqrt{2}\), which implies \(a_1 \ge 2-\sqrt{2}\).  As \(a(u) \ge 0\) for all \(u \in [-1,1]\), then \(a(-1) =1-a_1-\frac{1}{2}a_1^2 \ge 0 \), which implies that \(a_1 \le \sqrt{3}-1\). We now choose our \(a_1\) within this interval. We note that, interestingly, the choice is not unique (we were able to obtain exact solutions with several choices of \(a_1\)).  Choosing \(a_1=2/3\), we obtain \(a_2=-2/9\), for a rather simple polynomial of \(a(u)=1+\frac{2}{3}u-\frac{2}{9}u^2\).  This polynomial now satisfies conditions (\ref{it:5cor})-(\ref{it:7cor}) in \Cref{thm:SDPCor}.

With this choice, we now write the optimization problem as a semidefinite program. We use the Julia code adapted from the code of Dostert, Kolpakov, and Oliveira \cite{Dostert:2022} and the supporting code by Dostert, de Laat, and Moustrou \cite{DDM:2021}, making the changes dictated by our new weight function, \(a\), and the increasing bijection, \(r_c\). We now choose \(x_0=\frac{7}{10}\) and \(x_0=\frac{9}{10}\) for dimensions 8 and 24, respectively. Solving Problem (\ref{eq:SDP}) with the SDPA-GMP solver \cite{Nakata:2010} in Julia (using \(N=5\) for both, and \(d=10\) and \(d=14\) for 8 and 24 dimensions, respectively), we obtain numerical solutions with objective values agreeing with 240 and 196560 to the working precision.

\subsection{Rounding process}

We note that the raw output from the solver may not accurately meet all constraints from \Cref{thm:SDPCor} due to floating point arithmetic. To obtain a valid certificate, we now round the solutions found by the solver, so that each number in the solution is a rational number. For this process, we only consider the matrices, \(A_0,\ldots, A_d\). We round the entries of each matrix, enforcing positive semidefinite conditions directly. Since the bounds are exact, we have to trace carefully which inequalities actually become equalities in order to establish the exactness of the bound.

Rational recovery of numerical sum-of-squares decompositions was studied by Peyrl and Parrilo \cite{Peyrl:2008}. For semidefinite certificates arising in packing problems, including solutions that are not strictly feasible, see Dostert, de Laat, and Moustrou \cite{DDM:2021}. In the present setting, attaining the bounds in dimensions 8 and 24 forces the equality constraints described below.
We discuss the case of the kissing configuration from \(E_8\) in detail. The case of the Leech lattice is analogous, up to minor modifications that we also mention below.  

Let \(x_1,\ldots, x_{240}\) be the \(240\)  points of tangency between unit balls tangent to a central unit ball, in the \(E_8\) configuration. As before, \(t_{ij}\) represents \(x_i \cdot x_j\).  So, we know that \(x_i \cdot x_j \in \{-1, -\frac{1}{2}, 0, \frac{1}{2}, 1\}\).

We have \(F_k(0,0) \ge 0\), \(\sum_{i,j}P_{k}^n (t_{ij}) \ge 0\), \(P_0^n \equiv 1\), \(F_0(0,0) \ge a(0)^2\), and \(a(0)=1\).  Thus:
\begin{align*}
    \sum_{i, j}f(t_{ij}, 0, 0) =\sum_{k=0}^d F_k(0,0) \sum_{i,j}P_{k}^n (t_{ij}) \ge F_0(0,0) \tau_8^2 \ge a(0)^2 \tau_8^2 =\tau_8^2
\end{align*}
 We also have that for \(i \ne j\), then \(f(t_{ij},0,0) \le 0\), and that \(f(1, 0, 0) \le M\).  Then, for our configuration, we have:
 \begin{align*}
    \tau_8^2 \le \sum_{i, j}f(t_{ij}, 0, 0) \le \sum_{i} f(1, 0, 0) \le \tau_8 M 
\end{align*}

However, we have found an optimal bound of \(M=240\), which is the same as the kissing number in eight dimensions, \(\tau_8=240\), implying that all inequalities in the above manipulation are actually equalities.  Namely, this implies that \(f(t_{ij}, 0, 0)=0\) for all \(i \ne j\).  In particular, \(f(t_{ij}, 0, 0)=0\) for \(t_{ij} \in \{-1, -\frac{1}{2}, 0, \frac{1}{2}\}\).  This also implies \(f(1,0,0)=240\) and \(F_0(0,0)=1\).

We also require \(\partial_t f(-\frac12,0,0)=0\) and \(\partial_t f(0,0,0)=0\).  Note that the function \(t\mapsto f(t,0,0)\) is non-positive on \([-1,\frac{1}{2}]\) and vanishes at the interior points \(-\frac{1}{2}\) and \(0\), so these are local maxima. Since equality holds in the first inequality, \(F_k(0,0)\sum_{i,j}P_k^n(t_{ij})=0\) for every \(k\ge1\). We know that \(\sum_{i,j}P_k^n(t_{ij})=0\) for \(1\le k\le 10\), except at \(k=8\), where it is positive. Thus, we must impose \((A_8)_{00}=F_8(0,0)=0\). As \(A_8\) is positive semidefinite, its entire first row is zero. We also consider \((A_0-\alpha\alpha^T)_{00}\), and see that this is \((A_0)_{00}-\alpha_0^2=F_0(0,0)-1=0\). Similarly, as \(A_0-\alpha\alpha^T\) is positive semidefinite, its first row is all zeros.  In other words, the first row of \(A_0\) is \(\alpha^T\).

We now consider that \(\partial_u f(t,0,0)=\sum_k (A_k)_{01}P_k^n(t)\). For \(t\in\{-1,-\frac12,0\}\) we have \(t<\frac12(1-c)\le I_c(u,0)\) for all \(u\in[-1,1]\), so \(u\mapsto f(t,u,0)\) is non-positive on \([-1,1]\).  For these values of \(t\), we have \(f(t, 0,0)=0\).  We then impose \(\partial_u f(t,0,0)=0\) for \(t \in \{-1, -\frac{1}{2}, 0\}\). We have \(f(1, 0,0)=240\), and, by condition (\ref{it:4cor}), \(f(1, u, u) \le 240\).  So \(f(1,u,u)\) attains its maximum at \(u=0\).   Also, each \(F_k\) is symmetric, and thus \(0=\frac{d}{du}f(1,u,u)\big|_{u=0}=2\partial_u f(1,0,0)\).  So, we impose \(\partial_u f(1,0,0)=0\).  Finally, let \(\psi(u)=f(I_c(u,0),u,0)\). This path runs along the boundary \(t=I_c(u,v)\) of the region in condition (\ref{it:3cor}), so \(\psi(u)\le0\) for all \(u\in[-1,1]\), and \(\psi(0)=f(\frac12,0,0)=0\). Hence \(\psi\) has a local maximum at \(u=0\), and \(\psi'(0)=0\). Since \(I_c(u,0)=\frac12(1-cu)\), the chain rule gives \(\psi'(0)=\partial_u f(\frac12,0,0)-\frac c2\,\partial_t f(\frac12,0,0)\). We thus impose \(\partial_u f(\frac12,0,0)=\frac c2\,\partial_t f(\frac12,0,0)\).  

In a similar way, we can enforce the equalities imposed by the Leech lattice in 24 dimensions (where in this case we will have \(t_{ij} \in \{-1, -\frac{1}{2},-\frac{1}{4}, 0, \frac{1}{4}, \frac{1}{2}, 1\}\)). We impose analogous conditions for each of these points. Also, instead of \(A_8\) having the first row of zeros, we require \(A_{12}\) to have an entirely zero first row. 

\subsection{Verification}

After this rounding process, our solutions are given by the matrices \(A_0, \ldots, A_d\). In practice, we store the matrices \(
\widehat A_k = A_k/M\), where \(M=240\) in 8 dimensions and 196560 in 24 dimensions. Now we want to ensure that our solution truly provides a valid upper bound for the average kissing number. We do this by directly checking that our solution meets the hypotheses of \Cref{thm:SDPCor}. We meet the first two conditions by checking that \(\widehat A_1, \ldots, \widehat A_d, \widehat A_0-\frac{1}{M}\alpha \alpha^T\) are all positive semidefinite matrices (although this will already be true by virtue of our rounding process).  

While condition (\ref{it:3cor}) was initially enforced by sum-of-squares conditions, this is no longer guaranteed after rounding (namely, the sum-of-squares polynomials are removed from consideration, and only the matrices are rounded, resulting in the polynomial \(f\) being reconstructed from the rounded matrices). Thus, we verify this condition by Bernstein subdivision.  

We first map the region onto a box. Since \(1+I_c(u,v)>0\) on \([-1,1]^2\), the substitution
\(t=-1+s\,(1+I_c(u,v))\) maps \([0,1]\times[-1,1]^2\) onto \(\{(t,u,v):-1\le t\le I_c(u,v)\}\). Hence condition (\ref{it:3cor})
holds if and only if the polynomial
\[
h(s,u,v)=f\bigl(-1+s(1+I_c(u,v)),\,u,\,v\bigr)
\]
is non-positive on \([0,1]\times[-1,1]^2\). We subdivide this box and, on each sub-box, write \(h\) in the Bernstein basis. The tensor-product Bernstein basis is nonnegative and forms a partition of unity on each box. Consequently, the value of a polynomial is bounded above by its largest Bernstein coefficient, see Farouki \cite{Farouki:2012}. Thus, non-positive Bernstein coefficients certify non-positivity throughout the box. However, we note that this method cannot, in general, certify non-positivity on a box with a zero (where the zeros are determined by the \(E_8\) and Leech lattices, respectively). In this case, we then verify non-positivity on these boxes by using Taylor's Theorem to write the polynomial in terms of an expression with the Hessian.

Consider a box, \(B\), and a point \(p \in B\) such that \(h(p)=0\). Specifically, the relevant points \(p=(s_0, 0,0)\) satisfy \(s_0\in\{0,\tfrac13,\tfrac23,1\}\) in 8 dimensions and \(s_0\in\{0,\tfrac13,\tfrac12,\tfrac23,\tfrac56,1\}\) in 24 dimensions.
  
We now use the following results. If \(0<s_0<1\), \(\nabla h(p)=0\) and the Hessian is negative semidefinite for every point in the box, then \(h\) is non-positive on the box. If \(p\) lies on the face \(s=0\) of \(B\) (or \(s=1\)), \(\partial_sh\le0\) (or \(\partial_sh\ge0\)) on \(B\), \(\partial_uh(p)=\partial_vh(p)=0\), and the \(2 \times 2\) Hessian of \(h\) in \((u,v)\) is negative semidefinite at every point of the face of \(B\), then \(h\) is non-positive on the box. Note that several of these hypotheses hold because of the equalities we imposed in the rounding step.  Thus, with Bernstein subdivision and using these results for boxes with a zero, we may rigorously prove non-positivity on the region described in condition (\ref{it:3cor}).

Conditions (\ref{it:4cor}) and (\ref{it:5cor}) are also verified with Bernstein subdivision (although, we have ensured condition (\ref{it:5cor}) holds via construction). Condition (\ref{it:6cor}) was satisfied by construction of \(a\), and condition (\ref{it:7cor}) can easily be verified.  

After verifying that our solution satisfies each of the constraints from \Cref{thm:SDPCor}, we then conclude that we have a valid upper bound on the average kissing number. Namely, the approach described provides upper bounds of \(240\) and \(196560\) in 8 and 24 dimensions, respectively. The \(E_8\) lattice and Leech lattice show that these bounds are achieved as the limits of average contact numbers of their finite subpackings, and thus, we conclude that \(\kappa_8 =240\) and \(\kappa_{24}=196560\).  

As we noted earlier in this section, the solutions we found were not unique. We chose several weight functions, \(a\), that led to exact solutions. Even varying \(x_0\) within these weight functions, the solution often remained exact.

\section{Optimizing over auxiliary functions}\label{sect:opt}
In this section we explain how to obtain polynomial upper bound certificates for \(\kappa_n\) by optimizing over auxiliary functions \(G\) from \Cref{thm:SDP}. Before doing so, we need to mention that using small and simple polynomials, as we did for exact solutions in dimensions 8 and 24, in practice does not produce the best bounds in other dimensions. Therefore, it makes sense to extend the semidefinite program to include an optimization over \(G\). We begin by fixing \(N \ge 1\), and then saying \(G(u)=\sum_{i=0}^N \beta_i u^i\), where \(\beta =(\beta_0, \ldots, \beta_N)\) determines \(G\).  We use the same definition of the kernels \(F_k\), where we parametrize these by matrices \(A_k\).  

We see that requiring \(A_k\) to be a positive semidefinite \((N+1) \times(N+1)\) matrix, for \(k=1, \ldots, d\), we obtain that all principal submatrices of \(F_k\) are positive semidefinite.  

In a similar way, all principal submatrices of \(F_0 -\frac{G \otimes G}{\lambda}\) are positive semidefinite if and only if \(A_0 -\frac{\beta \beta^T}{\lambda}\succeq 0\). Let
\[
B= \begin{pmatrix}
A_0  & \beta \\
\beta^T & \lambda
\end{pmatrix}.
\]
Then \(A_0 -\frac{\beta \beta^T}{\lambda}\succeq 0\) if and only if \(B\succeq 0\). Therefore, condition (\ref{it:2}) is satisfied by requiring \(B\succeq 0\) to be a positive semidefinite matrix.  

To meet condition (\ref{it:3}), we require \Cref{eq:4.1}. For assumptions (\ref{it:4}) and (\ref{it:5}), we also employ sum-of-squares polynomials. We use the Markov-Luk\'{a}cs theorem to obtain that \(G(u) -f(1, u, u)\) is nonnegative if and only if it equals \(l_1(u)+(1-u^2)l_2(u)\) for sum-of-squares polynomials \(l_1\) and \(l_2\).  Similarly, \(G(u) +G(-u) -2\) is nonnegative if and only if it equals \(b_1(u)+(1-u^2)b_2(u)\) for sum-of-squares polynomials \(b_1(u)\) and \(b_2(u)\). Thus, we require sum-of-squares polynomials \(l_1, l_2, b_1, b_2 \) such that:
\begin{align}
    G(u)-f(1, u, u) =l_1(u)+(1-u^2)l_2(u)
\end{align}
\begin{align}
    G(u)+G(-u)-2 = b_1(u)+(1-u^2)b_2(u)
\end{align}

These conditions imply that \(f(1, u, u) \le G(u)\) and \(G(u)+G(-u) \ge 2\) for all \(u \in [-1,1]\), respectively. Assumption (\ref{it:6}) can be met by the linear condition:
\begin{align*}
    G(1)=\beta_0+\ldots+\beta_N \ge 2
\end{align*}

Therefore, we have the following optimization problem:
\begin{align}
    \text{minimize } &\lambda  \label{eq:SDPNEW} \\
    & f(t, u, v)=\sum_{k=0}^{d}\sum_{i,j =0}^{N} A_{k, ij} u^i v^j P_{k}^{n}(t), G(u)=\sum_{i=0}^{N}\beta_i u^i \notag \\
    & A_k \in  \mathbb{R}^{(N +1) \times (N+1)} \text{ is positive semidefinite for each }k=1,\ldots,d \notag \\
    &\begin{pmatrix}
A_0  & \beta \\
\beta^T & \lambda
\end{pmatrix} \text{ is positive semidefinite} \notag \\
    &q_1, \ldots, q_5 \text{ are sum-of-squares polynomials satisfying } (\ref{it:1}) \notag \\
    & l_1 \text{ and } l_2 \text{ are sum-of-squares polynomials satisfying }(\ref{it:4}) \notag \\ 
    & b_1 \text{ and } b_2 \text{ are sum-of-squares polynomials satisfying }(\ref{it:5}) \notag \\ 
    & \beta_0+\ldots+\beta_N \ge 2 \notag
\end{align}

\begin{remark}
    Note that there remains an omission from the optimization problem. Namely, we do not optimize over \(x_0\).  Unfortunately, we could not find a way to include \(x_0\) as a variable in the semidefinite program solver. We must choose \(x_0\in(0,1)\) explicitly in each dimension. However, there are still some observations we may make regarding a good choice for \(x_0\). As we mentioned above, \(x_0\) cannot be too close to zero, as if it were, then very small balls (that may fit between larger balls) would have a positive contribution to a summation of the weights. Also, \(x_0\) cannot be too close to \(1\), as if it were, then we may create an almost kissing configuration with balls of radius \(1/x_0\), in which case the bound would simply be \(2\tau_n\) or very close to it. With that being said, in general, we found that an "optimal" \(x_0\) gets larger (approaching one) as we increase the dimension. In practice, an appropriate \(x_0\) was found through a numerical search over simple rational choices.
\end{remark}

We now write Problem (\ref{eq:SDPNEW}) as a semidefinite program.  The ideas are very similar to those needed to turn Problem (\ref{eq:SDP}) into a semidefinite program, with further added conditions.  We now choose \(x_0\) to be a simple rational number, with the general observation that it gets closer to \(1\) as the dimension increases.  We also note that the degree of Gegenbauer polynomials tends to grow as the dimension goes higher. In general, we used the same (or one higher) degree, \(d\), as the one that provides the best linear programming upper bound for the kissing number.  Thus, as we reach higher dimensions, obtaining the best possible bounds becomes limited by computing power. While we use a kernel degree, \(N\), of five for lower dimensions, due to increased computing cost compared to relatively small improvements on the obtained bound, we lower this kernel degree to four for higher dimensions. With these choices of parameters, we now can solve the program.

After solving the optimization problem, we follow a similar process to that described in the discussion about solutions in 8 and 24 dimensions.  However, in the rational rounding process, we must now round matrices \(A_0,\ldots, A_d\) in addition to the bordered matrix corresponding to condition (\ref{it:2}). In this way, the weight function \(G\) is rounded as well. We again enforce positive semidefinite conditions by construction. In other dimensions, there are no exact lattice conditions to enforce and no exact bound that must be achieved. This makes non-positivity conditions slightly easier. Particularly, for condition (\ref{it:3}), we solve the semidefinite program by requiring a function, \(f\), that is less than some small \(-\varepsilon\).  Therefore, we assure that non-positivity holds after rounding. We do a similar process for conditions (\ref{it:4}) and (\ref{it:5}), where we implement this room for error with a small \(-\varepsilon\) on our given region. However, we still formally verify conditions (\ref{it:3})-(\ref{it:5}) via Bernstein subdivision.  With the room for error introduced, Bernstein subdivision was sufficient (there were no exact zeros, meaning that the process could successfully complete). Condition (\ref{it:6}) can be manually verified. After verifying that the six conditions from \Cref{thm:SDP} are met by the rounded solution, we obtain new upper bounds to the average kissing number in dimensions 3 through 30. 

\section*{Data}

Exact polynomial certificates for all dimensions 3 through 30 are available as standalone files attached to the paper.  The certificate files are also available via \url{https://github.com/markbockhaus/Exact_polynomial_certificates}.

\section*{Statement on the use of AI tools}

The main ideas and results of this paper are due to the authors. AI assistance was indispensable for writing and executing semidefinite programs that lead to the computational results and for the trial-and-error search process in finding concrete parameters for exact solutions. ChatGPT Sol 4.6 (OpenAI) was used in some computational stages of the project. Claude (Anthropic), using models Fable 5.1 and Opus 5.5, through Claude Code, was used by the authors to assist with the editing of existing code and writing of the new code for the semidefinite programs in Julia, the writing of code in Python and Julia for the rounding and verification procedures, and the running of the code. Both Claude and ChatGPT were used for reviewing the literature and for suggesting minimal edits to the text of the paper. The authors have reviewed all work done by AI and take full responsibility for the contents of the paper.

\section*{Acknowledgments}
 Mark Bockhaus was partially supported by the NSF S-STEM Program. Alexey Glazyrin was partially supported by the NSF grants DMS-2054536, DMS-2349063 and the Simons Foundation's Travel Support for Mathematicians program.

\bibliographystyle{amsplain}
\bibliography{references}

\end{document}